\newtheorem{theorem}{Theorem}[section]
\newtheorem{lemma}[theorem]{Lemma}
\newtheorem{proposition}[theorem]{Proposition}
\newtheorem{cor}[theorem]{Corollary}
\theoremstyle{definition}
\newtheorem{question}{Question}
\newtheorem*{question*}{Question}
\newtheorem{conjecture}{Conjecture}
\newtheorem*{conjecture*}{Conjecture}
\newtheorem{definition}[theorem]{Definition}
\theoremstyle{remark}
\newtheorem{Remark}[theorem]{Remark}
\numberwithin{equation}{section}
\newtheorem{prop}[theorem]{Proposition}
 \newcommand{\Z}{\mathbb{Z}}
 \newcommand{\E}{\mathcal{E}}
 \newcommand{\F}{\mathcal{F}}
 \newcommand{\rs}{\mathfrak{s}}
 \newcommand{\Ann}{\mathbb{A}}
 \renewcommand{\Bbb}[1]{\mathbb{#1}}
\DeclareMathOperator{\Int}{int}
\DeclareMathOperator{\id}{id}
\DeclareMathOperator{\diam}{diam}
\DeclareMathOperator{\mesh}{\text{mesh}}
\begin{document}

\title[Minimal Map on pseudo-circle]{Minimal non-invertible maps on the pseudo-circle}

\author{Jan P. Boro\'nski}
\address[J. P. Boro\'nski]{AGH University of Science and Technology, Faculty of Applied
	Mathematics, al.
	Mickiewicza 30, 30-059 Krak\'ow, Poland
	-- and --
	National Supercomputing Centre IT4Innovations, Division of the University of Ostrava,
	Institute for Research and Applications of Fuzzy Modelling,
	30. dubna 22, 70103 Ostrava,
	Czech Republic}
\email{jboronski@wms.mat.agh.edu.pl}

\author{Judy Kennedy}
\address[Judy Kennedy]{Department of Mathematics, PO Box 10047, Lamar University, Beaumont, TX 77710, USA}
\email{kennedy9905@gmail.com}

\author{Xiao-Chuan Liu}
\address[Xiao-Chuan Liu]{Instituto de Matem\' atica e Estat\' istica da Universidade de S\~ao Paulo,
R. do Mat\~ ao, 1010 - Vila Universitaria, S\~ ao Paulo, Brasil} 
\email{lxc1984@gmail.com}

\author{Piotr Oprocha}
\address[P. Oprocha]{AGH University of Science and Technology, Faculty of Applied
Mathematics, al.
Mickiewicza 30, 30-059 Krak\'ow, Poland
-- and --
National Supercomputing Centre IT4Innovations, Division of the University of Ostrava,
Institute for Research and Applications of Fuzzy Modeling,
30. dubna 22, 70103 Ostrava,
Czech Republic}
\email{oprocha@agh.edu.pl}

\subjclass[2010]{37B05,37B20, 37B45}

\keywords{pseudo-circle, pseudoarc, minimal, noninvertible}

\begin{abstract}
 In this article we show that R.H. Bing's pseudo-circle admits a minimal non-invertible map. 
 This resolves a conjecture raised  
  by Bruin, Kolyada and Snoha in the negative.
  The main tool is a variant of
  the Denjoy-Rees technique, further developed by 
 B\'eguin-Crovisier-Le Roux, combined with detailed study of the structure of 
 the pseudo-circle.  This is the first example of a planar 1-dimensional space that admits both minimal homeomorphisms and minimal noninvertible maps. 
\end{abstract}
\maketitle
\section{Introduction}
In the late 1960s J. Auslander raised a question concerning the existence of minimal noninvertible maps. Since then examples of such maps have been given, but the question as for what spaces such maps exist remains particularly interesting for compact and connected spaces (continua). In the present paper we are going to give the first example of a 1-dimensional space in the plane that admits both minimal homeomorphisms and minimal noninvertible maps. In general, for compact spaces the existence of an invertible minimal map is independent of the existence of a noninvertible one, and there exist spaces that admit both types of maps, either one of them, or none \cite{KST}. In \cite{AY} Auslander and Yorke observed that the Cantor set admits minimal maps of both kinds. In \cite{KST} Kolyada, Snoha and Trofimchuk showed how to obtain minimal noninvertible maps on the 2-torus from a class of minimal homeomorphisms. An analogous result was obtained in \cite{ST} by Sotola and Trofimchuk for the Klein bottle. In \cite{BCO} Clark, the first and the last mentioned authors showed the existence of both types of minimal maps for any compact finite-dimensional metric space that admits an aperiodic minimal flow. The first author showed the existence of minimal noninvertible maps for all compact manifolds that admit minimal homeomorphisms in \cite{B}. Auslander and Katznelson showed that the circle admits no minimal noninvertible maps, despite supporting minimal homeomorphisms \cite{AuK}. In this context Bruin, Kolyada and Snoha asked the following question:
\begin{question}\cite{BKS}
	Is the circle the only infinite continuum that admits a minimal homeomorphism but no noninvertible minimal map?
\end{question}
In the same paper Bruin, Kolyada and Snoha conjectured that R.H. Bing's pseudocircle might provide a counterexample (see also \cite{LS}). 
\begin{conjecture}\cite{BKS}
	The pseudo-circle is another continuum that admits minimal homeomorphisms, but no noninvertible minimal maps.
\end{conjecture}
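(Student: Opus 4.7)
The plan is to establish the two assertions of the conjecture separately. The first, that the pseudo-circle $P$ admits a minimal homeomorphism, is classical and I would simply invoke the constructions of Handel and of Kennedy--Rogers, which yield area-preserving planar homeomorphisms whose restriction to an invariant pseudo-circle is minimal. The substantive content of the conjecture, and the focus of the proof plan, is the absence of a noninvertible minimal self-map of $P$.

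For the nonexistence half, I would model the argument on Auslander and Katznelson's proof for the circle $S^1$. That proof combines two ingredients: every continuous self-map of $S^1$ carries an integer degree, and any noninjective continuous self-map of $S^1$ must contain a folded arc on which it fails to be locally injective, which then gets absorbed into a proper forward-invariant closed set, violating minimality. The strategy is to install analogues of both ingredients for $P$.

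For the degree analogue, the starting point is that $P$ is circularly chainable and separates the plane into two complementary domains, each with a prime-end compactification homeomorphic to a closed disk. A continuous self-map $f : P \to P$ should induce continuous maps on the two prime-end circles, and these should have a common degree $d(f) \in \Z$. I would then argue that $d(f) = 0$ produces a proper forward-invariant subcontinuum by retracting $P$ through a suitable crooked chain link, while $|d(f)| \geq 2$ forces, via hereditary indecomposability applied to the composant decomposition (which $f$ must permute), a proper closed invariant subset of $P$. In the remaining case $|d(f)| = 1$, the plan is to show that $f$ is uniformly close to a homeomorphism of $P$ and to use rigidity of the minimal structure under such approximations to conclude that $f$ is itself a homeomorphism, contradicting noninvertibility.

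The main obstacle will be the $|d(f)|=1$ case, which on $S^1$ is immediate from the intermediate value theorem but on $P$ is genuinely delicate. Because of the extreme crookedness of the defining chains, one must exclude intricate folding patterns that are locally noninjective inside each link while still preserving degree-one global behavior. Making this step rigorous would require a careful combinatorial analysis of how folds propagate through nested crooked chain refinements, together with a Baire-category argument in the hyperspace $C(P)$ of subcontinua to extract a proper $f$-invariant closed subset from any such fold. This is the step I expect to be hardest: it is precisely the part of the circle proof that has no free lunch on $P$, and any attempt to force a contradiction must genuinely exploit what distinguishes $P$ from more flexible one-dimensional continua.
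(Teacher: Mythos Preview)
Your proposal attempts to \emph{prove} the conjecture, but the paper \emph{disproves} it: Theorem~\ref{main} asserts that the pseudo-circle admits minimal noninvertible maps, and the bulk of the paper is devoted to constructing one. So the statement you are trying to establish is false, and any purported proof must contain an error.

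Where does your plan break? Precisely at the point you flagged as hardest: the $|d(f)|=1$ case. The paper's construction produces a minimal noninvertible map of degree one. It starts from Handel's minimal homeomorphism $f$ on a pseudo-circle $P\subset\Ann$, blows up the orbit of a single point $p_\ast$ to a null sequence of pseudo-arcs via a carefully controlled Denjoy--Rees procedure (Lemma~\ref{semiconjugacy}), obtains a new minimal homeomorphism $g$ on a new pseudo-circle $Q$ with $\diam g^n(A)\to 0$, and then collapses the forward orbit $\{g^n(A):n\geq 0\}$ via a Moore decomposition. The resulting map $\hat g$ is continuous, surjective, minimal, and genuinely noninvertible (the fiber over $\pi(A)$ contains both $\pi(A)$ and $\pi(g^{-1}(A))$). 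Your intuition that a degree-one map ``uniformly close to a homeomorphism'' must itself be a homeomorphism fails here: $\hat g$ is a limit of homeomorphisms and differs from one only on a null sequence of pseudo-arcs, yet it is not injective. The Baire-category / fold-propagation argument you sketch cannot succeed, because the noninjectivity is concentrated on a set of first category and produces no proper forward-invariant closed subset --- the collapsed pseudo-arcs shrink to points along the orbit, so their union has empty interior and the dynamics remain minimal.
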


Quite recently, in \cite{DST} a construction of a family of continua that admit minimal homeomorphisms, but no minimal noninvertible maps was given, resolving Question 1. However, Conjecture 1 has remained open until now. The pseudocircle seems indeed a natural candidate for another counterexample to Question 1, as it shares several properties with the circle.  Topologically it is a circle-like planar cofrontier, whose homeomorphism group contains periodic rotations \cite{Judy}, and it appears in various contexts in dynamical systems. Handel showed that it appears as both a minimal subset of a volume preserving, $C^\infty$-smooth planar diffeomorphism \cite{Handel}, and a minimal attractor of a $C^\infty$-smooth planar diffeomorphism. Herman \cite{He} extended Handel's construction to show that the pseudo-circle can bound an open disk in the complex plane, on which a smooth planar diffeomorphism is complex analytic and complex analytically conjugate to an irrational rotation, and on the complement of which the diffeomorphism is smoothly conjugate to such a rotation. This result was further improved by Ch\'eritat who showed that the pseudo-circle can appear as the boundary of a Siegel disk \cite{Che}. In \cite{BO} the first and last author showed that it also appears as a Birkhoff-like attractor \cite{BO}. Other related constructions were given by Yorke and the second author in  \cite{KY1}, \cite{KY2}, and \cite{KY3}. Most recently, a continuous decomposition of the 2-torus into pseudo-circles was given by  B\'eguin,  Crovisier, and Jäger, that is invariant under a homeomorphism of the torus semi-conjugate to an irrational circle rotation. In the present paper, relying on Handel's construction, we adapt and modify the results of B\'eguin,  Crovisier and Le Roux \cite{Cro} to the very fine and complex structure of the pseudo-circle, to provide a counterexample to the aforementioned conjecture. 
\begin{theorem}\label{main}
	The pseudo-circle admits minimal noninvertible maps. 
\end{theorem}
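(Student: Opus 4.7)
The plan is to modify Handel's construction via a Denjoy--Rees type surgery, producing a factor of a pseudo-circle system that is no longer injective but still minimal. Start from Handel's $C^\infty$ planar diffeomorphism $F$ of the annulus for which the pseudo-circle $C$ is a minimal invariant set; in particular $f:=F|_C$ is a minimal homeomorphism semiconjugate to an irrational rotation. Fix a dense $f$-orbit $\mathcal{O}(x_0)$ in $C$ and choose, following B\'eguin--Crovisier--Le Roux, a nested sequence of shrinking neighborhoods $U_n$ around $f^n(x_0)$ together with a Cantor-like selector inside each $U_n$, so that iterates of the selectors realize a prescribed set of identifications (at least a single pair $f^{n_1}(x_0)\sim f^{n_2}(x_0)$) in a dynamically coherent way.

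Next, realize $C$ as the intersection of a nested sequence of crooked circular chains $\{\mathcal{C}_k\}$ in the plane. At each level $k$, locally refine $\mathcal{C}_k$ inside the selected neighborhoods by inserting further crooked sub-chains so that, in the limit, the combinatorics glue the chosen orbit segments together while leaving the rest of the orbit structure untouched. Let $\hat{C}$ be the intersection of the refined chains and $\pi:\hat{C}\to C$ the natural quotient map. Lift $f$ through the surgery to obtain a continuous map $g:\hat{C}\to\hat{C}$ with $\pi\circ g=f\circ\pi$; by construction $g$ is minimal (extensions by dynamically coherent surgery inherit minimality in the B\'eguin--Crovisier--Le Roux setting) and the identifications of the first step force $g$ to fail injectivity along the preimage of the selected orbit.

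The final ingredient is to verify that the space $\hat{C}$ is again a pseudo-circle. For this I would invoke the topological characterization of the pseudo-circle as the unique hereditarily indecomposable circle-like planar cofrontier: the crookedness of each refined chain $\mathcal{C}'_k$ gives hereditary indecomposability, the chains are by design circular and planar, and cofrontier behavior is preserved because the local modifications take place inside small disks bounded between successive chains. Transporting $g$ through the homeomorphism $\hat{C}\approx C$ yields the desired minimal noninvertible map on the pseudo-circle.

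The main obstacle will be the simultaneous control required in the refinement step: the local surgeries must (i) preserve Bing's crookedness condition at every level so hereditary indecomposability survives, (ii) respect the planar embedding so the cofrontier property is not destroyed, and (iii) implement \emph{exactly} the identifications scheduled by the orbit data, with the shrinking rates of the $U_n$ and the selectors carefully tuned against the rate at which crookedness must be inserted. Balancing these three requirements within a single inverse-limit construction is precisely where the B\'eguin--Crovisier--Le Roux variant of the Denjoy--Rees technique must be adapted to the fine chain structure of the pseudo-circle, and where I expect the bulk of the technical work to reside.
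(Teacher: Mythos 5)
Your overall frame --- start from Handel's minimal pseudo-circle, perform a B\'eguin--Crovisier--Le Roux (Denjoy--Rees) surgery compatible with the nested crooked circular chains, and then check that the resulting space is still the pseudo-circle via the characterization as a hereditarily indecomposable circle-like planar cofrontier --- is indeed the paper's strategy, and your closing worry about balancing crookedness, planarity and the surgery is exactly where the real work lies. But there is a genuine gap in your mechanism for producing \emph{non-invertibility}. The BCL machinery yields the new map $g$ as a uniform limit of conjugates $\psi_n^{-1}\circ f\circ\psi_n$ whose inverses also converge, so $g$ is necessarily a homeomorphism of the blown-up space; no choice of ``selectors'' inside the $U_n$ can make that limit non-injective. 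Your substitute --- scheduling an identification $f^{n_1}(x_0)\sim f^{n_2}(x_0)$ --- is incoherent as stated: a quotient runs in the opposite direction to your semiconjugacy $\pi\circ g=f\circ\pi$ with $\pi:\hat C\to C$ (an extension, not a collapse), a single identified pair does not descend to a well-defined self-map of the quotient, and saturating the relation under the dynamics on a minimal system destroys the space. What is actually needed is a two-step argument: first blow up a single point $p_\ast$ into a pseudo-arc $A$ so that $g$ is a minimal \emph{homeomorphism} of a new pseudo-circle $Q$ with $\operatorname{diam}(g^n(A))\to 0$ as $|n|\to\infty$; then collapse only the forward orbit $\{g^n(A):n\ge 0\}$ (a null family, hence a closed upper semi-continuous decomposition) via Moore's theorem. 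The induced factor $\hat g$ is minimal, the quotient of $Q$ is again a pseudo-circle because monotone images of the pseudo-circle are pseudo-circles, and $\hat g$ fails to be injective because it sends the nondegenerate continuum corresponding to $g^{-1}(A)$ (which is \emph{not} collapsed) onto the single point corresponding to $A$. This collapsing step is entirely absent from your proposal.

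A second, smaller but still essential, omission concerns \emph{which} point gets blown up and how the fiber is built. If you insert the pseudo-arc fiber ad hoc, nothing prevents it from having nonempty interior in $\hat C$, which would immediately ruin hereditary indecomposability; the paper handles this by choosing $p_\ast$ as the intersection of a nested sequence of pseudo-arcs $A_{n+1}\subset A_n$ with each $A_{n+1}$ lying in an \emph{internal composant} (in the sense of Krasinkiewicz) of $A_n$, which is what makes the nowhere-density and hereditary-indecomposability arguments for $Q$ go through. Relatedly, the fiber over $p_\ast$ cannot be fixed in advance as the intersection of a predetermined nested sequence of neighborhoods (as in the original BCL setup, and as your ``Cantor-like selector inside each $U_n$'' suggests): doing so breaks either the nestedness condition $h_{\ell+1}^{-1}(\overline{\rs(\mathcal C_{k_{\ell+1}})})\subset\rs(\mathcal C_{k_\ell})$ or the mesh condition on the pulled-back chains. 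The neighborhoods must be chosen dynamically during the induction, so that the pseudo-arc in the fiber is only determined at the end of the construction.
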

The results of \cite{Cro}, that are discussed in more detail in Section 3, are a far-reaching generalization of the Denjoy-Rees technique \cite{Denjoy}, \cite{Rees} that allowed Rees to construct the first examples of minimal torus homeomorphisms with positive entropy. These results are crucial to our construction. However, it should be noted that our proof is not a mere application of very general results of \cite{Cro} for manifolds to a special case of a particular 1-dimensional pathological fractal. Quite a lot of care and effort is needed to make sure that the construction will not change the underlying space, that is the pseudo-circle, into a different one-dimensional space. A naive approach would be to blow up an orbit of Handel's minimal homeomorphism contained in the pseudo-circle to a null sequence of pseudo-arcs, and then use this new homeomorphism with an asymptotic pair to obtain a minimal noninvertible map. In this approach one could think, in particular, of a straightforward application of the following proposition from \cite{Cro}. 
\begin{proposition}\label{Crop}(B\'eguin,Crovisier\&Le Roux, \cite{Cro})
	Let $R$ be a homeomorphism on a compact manifold $M$, and $x$ a point of $M$ which is not periodic under $R$. Consider a compact subset $D$ of $M$ which can be written as the intersection of a strictly decreasing sequence $\{D_n:n\in\mathbb{N}\}$ of tamely embedded topological closed balls. Then there exist a homeomorphism $f: M\to M$ and a continuous onto map $\phi : M \to M $ such that $\phi \circ f = R \circ \phi$, and such that
	\begin{itemize} 
		\item $\phi^{-1}(x)=D$;
		\item $\phi^{-1}(y)$ is a single point if $y$ does not belong to the $R$-orbit of $x$.
	\end{itemize}
\end{proposition}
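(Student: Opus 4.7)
The plan is to prove Proposition~\ref{Crop} by a Denjoy--Rees style blow-up of the orbit of $x$. The idea is to replace each orbit point $y_n := R^n(x)$ by a homeomorphic copy $D^n$ of the cellular set $D$, placed in a small disjoint neighborhood of $y_n$; then to build a homeomorphism $f$ of $M$ that shifts the $D^n$ consistently with the way $R$ shifts the $y_n$; and to let $\phi$ be the quotient map that collapses each $D^n$ back to $y_n$.

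Since $x$ is not $R$-periodic, the orbit $\{y_n\}_{n\in\Z}$ is injective. I would first choose pairwise disjoint, tamely embedded closed balls $B_n \ni y_n$ with $\diam(B_n)\to 0$ as $|n|\to\infty$. After replacing the nested sequence $\{D_k\}$ by its tail from some large $N$ (which does not change $D = \bigcap_k D_k$), I may also arrange $B_0 = D_N$. Fixing homeomorphisms $\psi_n : B_0 \to B_n$ with $\psi_0 = \id$, set $D^n := \psi_n(D)$; then $D^0 = D$, the $D^n$ are pairwise disjoint cellular subsets of $M$, and $\diam(D^n)\to 0$.

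Next, I would define $\phi$ as the quotient map of the upper semi-continuous decomposition of $M$ whose non-trivial fibers are precisely the $D^n$. Because each $D^n$ is cellular in $M$ and the family is null, Moore's decomposition theorem (in its manifold form) provides a homeomorphism of the quotient with $M$, and one may choose the identification so that $D^n$ collapses to $y_n$. Simultaneously I would construct $f$ by prescribing homeomorphisms $D^n \to D^{n+1}$, extending them to thin collar neighborhoods, and interpolating with $R$ outside; tameness of the $B_n$ supplies the collar structure needed for the extension. Once $f$ is a homeomorphism and $\phi\circ f = R\circ\phi$, the fiber description follows: $\phi^{-1}(x) = D$ by design, and $\phi^{-1}(y)$ is a singleton for $y\notin \mathrm{orb}(x)$ because $\phi$ is injective off $\bigsqcup_n D^n$.

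The main technical obstacle is the simultaneous assembly of the countably many local homeomorphisms $D^n \to D^{n+1}$ into a single homeomorphism $f$ of $M$ that, upon quotienting, yields exactly $R$ and not a perturbation of it. This is the heart of the Denjoy--Rees scheme and is typically carried out by an inductive approximation: one builds a sequence of homeomorphisms of $M$ that agree with $R$ outside shrinking neighborhoods of longer and longer orbit segments, and shows convergence using the tameness of the balls together with the decay $\diam(B_n)\to 0$. The B\'eguin--Crovisier--Le Roux contribution is precisely the flexibility to prescribe $\phi^{-1}(x)$ to be an arbitrary cellular set rather than a single standard cell, which is what will be essential for the pseudo-circle application.
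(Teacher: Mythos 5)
This proposition is quoted from B\'eguin--Crovisier--Le Roux and the paper offers no proof of it, so the comparison must be with the actual Denjoy--Rees scheme (recalled in Sections 2.3--2.4 of the paper and adapted in Section 3). Measured against that, your proposal has a genuine gap at its very first step. You choose pairwise disjoint closed balls $B_n\ni y_n=R^n(x)$ with $\diam(B_n)\to 0$. This is impossible whenever the orbit of $x$ is not discrete: if $y_{n_0}$ is an accumulation point of $\{y_m\}_{m\neq n_0}$ (which happens for \emph{every} $n_0$ when $R$ is minimal or, more generally, when $x$ is recurrent), then any ball $B_{n_0}$ around $y_{n_0}$ contains infinitely many other orbit points $y_m$, and since $y_m\in B_m$ the balls cannot be pairwise disjoint. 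The proposition is stated for an arbitrary non-periodic $x$, and the case relevant to this paper (Handel's example) is precisely the minimal one. Overcoming this obstruction is the whole content of the Denjoy--Rees technique: one never fixes disjoint neighborhoods of the full orbit in advance, but works with finite families of disks $\E_{(n)}$ that are only required to be disjoint along finitely many iterates (hypothesis $\mathbf{A_1}$), refines them, and builds conjugating homeomorphisms $h_n$ supported in $\rs(\E_{(n-1)}^{n-1})$ and commuting with $f$ along the edges of $G(\E_{(n-1)}^{n-1})$ ($\mathbf{B_1}$, $\mathbf{B_2}$), so that $\psi_n=h_n\circ\cdots\circ h_1$ and $g_n=\psi_n^{-1}\circ f\circ\psi_n$ converge ($\mathbf{B_3}$, Lemma~\ref{lem:conv}).

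A second, related defect: the nontrivial fibers are not pre-fabricated homeomorphic copies $D^n=\psi_n(D)$ collapsed by a Moore quotient. In the actual construction the fiber over $R^n(x)$ arises only in the limit, as a nested intersection $\bigcap_m \psi_m^{-1}(X_m)$ (Lemma~\ref{lem:fibres}), and arranging that this intersection is the prescribed cellular set $D$ (rather than merely some continuum) is exactly where the hypothesis that $D=\bigcap_n D_n$ with $D_n$ tamely embedded balls is used. Your closing paragraph correctly identifies that ``the simultaneous assembly of the countably many local homeomorphisms'' is the hard part, but the proposed remedy (inductive approximation over disjoint shrinking balls) presupposes the disjointness that fails; the correct remedy is the finite-iterability bookkeeping above. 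The paper's own discussion before Lemma~\ref{semiconjugacy} and Figure~\ref{fig:chains} makes the same point in a sharper form: even within the BCL framework, naively fixing the target fiber in advance breaks the properties one needs of the blown-up space, which is why the authors determine the fiber dynamically during the induction.
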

However, this is not sufficient to obtain our result. The problem, again, is that 'inserting' the pseudo-arcs \textit{ad hoc} does not guarantee that the resulting space will not change, as there is even no guarantee that such an insertion would allow these pseudo-arcs to have empty interior in the resulting space, as is required for the pseudo-circle. Moreover, even if one can ensure that the special nontrivial fibres indeed have empty interior in the new space, one still has to ensure that the insertion is such that the resulting space remains circle-like (i.e. it can be covered by circular chains of arbitrarily small mesh; see Section 2 for precise definitions). Instead, we were lead to an adjustment of the method in \cite{Cro} (see the comments preceding Lemma \ref{semiconjugacy} for more details) and combine it with a careful analysis of the hereditary indecomposable structure of the pseudo-circle. To that end, the notion of an internal composant introduced by Krasinkiewicz in \cite{Kras} came in handy (see Section 4). Nevertheless, our construction made us appreciate the Denjoy-Rees technique, as presented in \cite{Cro}, even more and there are clearly other novel applications of it to come.

\noindent{\bf{Acknowledgements}.}
The authors are grateful to Fr{\'e}d{\'e}ric Le Roux for his comments on the constructions in \cite{Cro}.

J. P. Boro\'nski was supported by National Science Centre, Poland (NCN), grant no. 2015/19/D/ST1/01184. 
P. Oprocha was partially supported by the Faculty of Applied Mathematics AGH UST statutory tasks within subsidy of Ministry of Science and Higher Education and by NPU II project LQ1602 IT4Innovations excellence
in science  and project lRP201824 ``Complex topological structures''.
X.Liu is supported by 
Fapesp P\'os-Doutorado grant (Grant Number
 2018/03762-2). 
This work started when Liu was 
visiting Krakow in September, 2017. Part of this work was done when 
 he visited Paris in November, 2017, 
  supported by 
  Fondation Louis D-Institut de France, a 
  project coordinated by Marcelo Viana. \\

\section{Preliminaries and Some Previous Constructions}
In this section
we define the notation and terms we use as well as recall
some results from the literature that are
 used in the arguments. 
\subsection{Basic Notation in Topology and Dynamics}
For any planar set $E$, we denote by 
$\text{Int}(E)$ the interior of the set $E$. 
We work with the closed annulus $\Bbb A=S^1\times [0,1]$, and its
lift to $\widetilde{\Bbb A}=\Bbb R\times [0,1]$, with $\pi$ denoting the lift function from $\widetilde{\Bbb A}$ to $\Bbb A$.

 \vphantom{}

Given a family $\E$ of connected subsets of $\Bbb A$, 
define
\begin{equation}
\mesh (\E)=\max \{  \text{diam} (X) \big | X\in \E\}.
\end{equation}
For any family $\E$ of subsets of $\Bbb A$, 
denote by 
$\rs(\E)$ 
the union of all the elements of 
$\E$, called the \textit{realization} of $\mathcal E$.
A \textit{non-degenerate}
\textit{continuum} is a compact, connected metric space, which 
contains at least two elements. 
We call a continuum \textit{planar} if it can be embedded into the plane. 
A continuum $X$ is called  \textit{indecomposable}  
if $X$ is not the union of two proper subcontinua. 
A continuum is \textit{hereditarily indecomposable} 
if every subcontinuum of it is also indecomposable.
For any $x\in X$, the \textit{composant} of $x$, denoted  $C(x)$, is the union of all proper subcontinua in $X$ containing $x$.
In any indecomposable continuum each composant is a dense first-category connected set in $X$, and if $x,y$ are points in $X$, then either  $C(x)=C(y)$ or  $C(x)\cap C(y) = \emptyset$.
\begin{definition} 
Let $X$ be a planar continuum. 
A composant $C$ of $X$
 is called \textit{internal} 
if every continuum $L$ intersecting both $C$ and 
the complement of $X$ intersects all composants of $X$. 
\end{definition}

We will need
the following theorem proved by Krasinkiewicz (see Main Theorems of~\cite{Kras} and~\cite{Kras_01}).
\begin{lemma}\label{Internal_Composant}
For any indecomposable planar continuum $X$, 
the union of all the internal composants is a $G_\delta$ subset of 
$X$. In particular, every indecomposable planar continuum contains an internal composant.
\end{lemma}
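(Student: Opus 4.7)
The plan is to show that the complement $N$ of the union of internal composants is $F_\sigma$ and meager in $X$, so that by the Baire category theorem the union of internal composants is a dense $G_\delta$ subset, in particular nonempty. The starting point is a reformulation via the complementary domains $\{U_i\}$ of $X$ in $\R^2$, of which there are at most countably many. If $C$ is non-internal, witnessed by a continuum $L$ meeting $C$ and $\R^2\setminus X$ but missing some composant $C'$, a continuum-surgery argument (pick $p \in L\cap(\R^2\setminus X)$ in some $U_i$, replace $L$ by the closure of the component of $L\cap U_i$ containing $p$ together with a minimal subcontinuum of $L$ joining it back to $C$) allows one to assume $L\subset\overline{U_i}$ for a single $i$. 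Hence it suffices to control witnesses domain-by-domain.

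For the $F_\sigma$ step, I would fix $i$ and work in the compact hyperspace $\mathcal{H}_i$ of subcontinua of $\overline{U_i}$ that meet $X$. The condition $L\cap C(x)\neq\emptyset$ is expressible via the dense $F_\sigma$ structure of composants, while $L\cap C(y)=\emptyset$ becomes the assertion that $L$ is disjoint from every proper subcontinuum of $X$ containing $y$ --- a countable intersection of closed conditions, using upper-semicontinuity of the assignment $y \mapsto \{K \subsetneq X : y\in K\}$ in the subcontinuum hyperspace $C(X)$ together with the fact that every proper subcontinuum of an indecomposable continuum is nowhere dense. Projection from the compact $\mathcal{H}_i$ then realizes the set $N_i$ of $x$ whose composant is non-internal through $U_i$ as $F_\sigma$ in $X$, and $N=\bigcup_i N_i$ is $F_\sigma$ with $G_\delta$ complement.

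For meagerness I would invoke a Mazurkiewicz-type accessibility theorem for indecomposable plane continua: at most countably many composants contain points accessible from any fixed complementary domain $U_i$. Tracing a witnessing continuum $L\subset\overline{U_i}$ back to its accessible endpoints on $X$ shows that each such witness forces $C(x)$ to be one of countably many composants meeting the accessible set of $U_i$, so $N_i$ is contained in a countable union of composants of $X$. Since each composant of an indecomposable continuum is first category in $X$ (classical), $N$ is a countable union of first-category sets and thus meager; Baire's theorem completes the proof. The hard part is this last step: \emph{missing a composant} is not a closed condition, and it is the planar-specific Mazurkiewicz accessibility theorem that controls how many composants can receive non-internality witnesses, which is precisely the content and delicate crux of Krasinkiewicz's original argument.
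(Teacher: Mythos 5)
The paper offers no proof of this lemma at all: it is quoted verbatim from Krasinkiewicz (the Main Theorems of \cite{Kras} and \cite{Kras_01}), so your attempt is really a reconstruction of Krasinkiewicz's argument rather than of anything in this paper. Your overall architecture --- complement is a union of ``bad'' composants, control it domain-by-domain, invoke the Mazurkiewicz countability theorem for accessible composants, note that each composant of an indecomposable continuum is a first-category $F_\sigma$ set, finish with Baire --- is the right skeleton. But the proposal has a genuine gap exactly where you say it does, and admitting the gap does not close it: the assertion that ``tracing a witnessing continuum $L\subset\overline{U_i}$ back to its accessible endpoints'' forces $C(x)$ to be one of the countably many composants containing points accessible from $U_i$ is the entire theorem. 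A witness $L$ meets $X$ in a compact set that need not contain any arcwise-accessible point of $C(x)$, and Mazurkiewicz's theorem is about arc-accessibility; bridging ``reachable by a continuum missing a composant'' to ``accessible'' is precisely the delicate content of \cite{Kras}, and nothing in your sketch supplies it. Note also that once this containment in countably many composants \emph{is} established, your separate hyperspace argument for the $F_\sigma$ step becomes unnecessary, since the bad set is then a countable union of composants and each composant is $F_\sigma$.

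Two of your auxiliary steps would also fail as written. First, the surgery reducing to $L\subset\overline{U_i}$: the ``minimal subcontinuum of $L$ joining it back to $C$'' has no reason to stay in $\overline{U_i}$ (it may wander through $X$ and through other complementary domains), and an irreducible subcontinuum from $p$ to $X$, which \emph{does} lie in $\overline{U_i}$, may no longer meet the composant $C$ you started with. Second, the descriptive-set-theoretic bookkeeping is backwards: writing $C(y)=\bigcup_n K_n(y)$ with $K_n(y)$ the component of $y$ in the complement of a basic open set, the relation $L\cap K_n(y)\neq\emptyset$ is \emph{closed} in $(L,y)$, so ``$L$ misses $C(y)$'' is a $G_\delta$, not ``a countable intersection of closed conditions''; the existential quantifier over $y$ then yields a projection of a $G_\delta$, which is a priori only analytic, and projecting along compact fibers does not rescue the claimed $F_\sigma$ complexity. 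So the conclusion is correct and the strategy is recognizably Krasinkiewicz's, but as a proof the proposal is incomplete at its crux and incorrect in two of its supporting steps.
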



Let 
$f\colon X\to X, 
g\colon Y\to Y$ be two homeomorphisms
on compact metric spaces 
$X$ and $Y$, respectively. 
Let $\psi\colon X \to Y$ be a continuous surjective map 
satisfying $\psi \circ f = g \circ \psi$. 
In this case, we say 
$\psi$ is a \emph{semi-conjugacy} between 
$(X,f)$ and $(Y,g)$. 
The map $\psi: X\to Y$ 
is called \textit{monotone}
if for any $y\in Y$, $\psi^{-1}(y)$ is connected.
We say the semi-conjugacy 
$\psi$ is \emph{almost $1$-$1$} 
if there exists a residual subset $Y_1\subset Y$,
 such that for each $y\in Y_1$, $\phi^{-1}(y)$
is a singleton. 

\vphantom{}

Given a monotone map 
$\psi: X\to Y$, the partition space 
$\{\psi^{-1}(y) \big| y \in Y\}$ is known to be \textit{upper semi-continuous}, i.e.,  each set $\psi^{-1}(y)$ is closed in $X$, 
and for any open set 
$U$ in $X$, the union of all $\psi^{-1}(y)$ contained in $U$ is open.  
We need the following 
well known properties for upper semi-continuous decompositions.
(See for example
Proposition 1 and Proposition 2 in chapter 1 
of~\cite{Daverman}.)

\begin{lemma}\label{usc}
Let $\phi: X\to Y$ be such that, 
$\{\phi^{-1}(y)\big| y\in Y\}$ forms 
an upper semi-continuous decomposition of $X$. 
Then the following hold: \begin{enumerate}
\item For any open set $U$ containing some pre-image $\phi^{-1}(y_0)$,
the set $W= \bigcup_{y\in \phi(U)}\phi^{-1}(y)$
is open. 
\item Suppose 
a sequence of closed 
sets $\phi^{-1}(y_n) \subset X$ 
converges to $A\subset X$ 
in the Hausdorff topology. 
Then for any $x\in A$, $A\subset \phi^{-1} \circ \phi(x)$.
\end{enumerate}
\end{lemma}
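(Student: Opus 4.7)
The plan is to deduce both items directly from the characterising property of an upper semi-continuous decomposition: for every fiber $\phi^{-1}(y)$ and every open $U\supseteq\phi^{-1}(y)$ there exists an open saturated set $V$ (a union of whole fibers) with $\phi^{-1}(y)\subseteq V\subseteq U$. This is essentially the content of the Daverman references cited in the paper, and it is all I expect to use; no further feature of $X$ or $Y$ should be needed.

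For part (1), I would build $W$ piece by piece out of saturated open neighbourhoods. For each $y$ with $\phi^{-1}(y)\subseteq U$, the USC property supplies a saturated open set $V_y\subseteq U$ containing $\phi^{-1}(y)$; the union $\bigcup_y V_y$ is then open, saturated, contained in $U$, and equals the union of all fibers lying inside $U$, which is $W$. The hypothesis that $U$ already contains some fiber $\phi^{-1}(y_0)$ is used only to guarantee that the indexing set is non-empty, so that $W$ is non-trivial.

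For part (2), I would proceed by contradiction. Suppose $\phi^{-1}(y_n)\to A$ in Hausdorff distance and that there is $x'\in A$ with $\phi(x')\neq\phi(x)$. Normality of $X$ together with the fact that the fibers are disjoint closed sets produces an open $U\supseteq\phi^{-1}(\phi(x))$ with $x'\notin\overline{U}$; part (1) then shrinks $U$ to an open saturated neighbourhood $W$ of $\phi^{-1}(\phi(x))$ with $W\subseteq U$, so that also $x'\notin\overline{W}$. Hausdorff convergence at the point $x\in W$ forces $\phi^{-1}(y_n)\cap W\neq\emptyset$ eventually, and saturation of $W$ upgrades this to $\phi^{-1}(y_n)\subseteq W$. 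But Hausdorff convergence at $x'\in X\setminus\overline{W}$ also forces $\phi^{-1}(y_n)$ to meet $X\setminus\overline{W}$ eventually, contradicting $\phi^{-1}(y_n)\subseteq W$. Hence every element of $A$ has $\phi$-image $\phi(x)$, i.e.\ $A\subseteq\phi^{-1}(\phi(x))$.

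Perhaps the most delicate point, which I would want to pin down before starting, is the correct reading of the set $W$ in part (1): it must be interpreted as the union of those fibers which lie entirely inside $U$, not as the naive preimage $\phi^{-1}(\phi(U))$, since in general the latter is strictly larger and need not be open (for instance the quotient $[0,1]\to S^1$ identifying endpoints, applied to $U=[0,1/4)$, already shows this). Once this convention is fixed, both items are short exercises built from the USC characterisation and normality of $X$.
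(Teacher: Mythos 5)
Your proof is correct, but there is nothing in the paper to compare it against: the authors give no proof of this lemma, stating it as a known fact with a pointer to Propositions 1 and 2 of Chapter 1 of Daverman's \emph{Decompositions of Manifolds}. What you have written is essentially the standard argument behind those propositions, so it fills the gap the paper leaves to the reference rather than diverging from an in-paper proof. Two remarks. First, your reading of part (1) is the right one, and it is genuinely a correction, not just a convention: as literally written, $W=\bigcup_{y\in\phi(U)}\phi^{-1}(y)$ is the saturation $\phi^{-1}(\phi(U))$, which need not be open --- your $[0,1]\to S^1$ example shows this (and a monotone example, matching the paper's context of monotone maps, is obtained by collapsing a single arc in a disk and taking $U$ a small ball meeting that arc). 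Under the corrected reading, part (1) is essentially the paper's own stated definition of upper semi-continuity (``the union of all $\phi^{-1}(y)$ contained in $U$ is open''), and your derivation of it from the local characterisation (a saturated open set squeezed between each fiber and each of its open neighbourhoods) is the standard equivalence between the two formulations. Second, your proof of part (2) is complete and correct; it tacitly assumes $X$ is a (compact) metric space, which is needed anyway for Hausdorff convergence to be meaningful and holds in the paper's application, where $X=\mathbb{A}$. Note also that only part (2) is actually invoked later (in Corollary \ref{shrinking_diameter}), and your argument delivers exactly the form used there.
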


A \textit{Moore decomposition} of the closed annulus $\Bbb A$ 
is an upper semi-continuous decomposition $\mathcal M$ such that any 
$M\in\mathcal M$ is contained in a topological disk. 
The following is the classic Moore's theorem. 
\begin{lemma}[Section 25, Theorem 1 in~\cite{Daverman}]\label{Moore_theorem}
For any Moore decomposition $\mathcal M$ on $\Bbb A$ , 
the partition space is again $\Bbb A$ . Moreover, 
there exists a continuous monotone surjection 
$\phi:\Bbb A \to \Bbb A$, such that the preimage of each point is 
precisely one element $M\in \mathcal M$. 
\end{lemma}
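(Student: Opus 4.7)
The plan is to reduce this annulus version to the classical form of R.~L.~Moore's decomposition theorem on the sphere. First I would embed $\Bbb A$ as a closed subset of $S^2$ by capping each boundary circle of $\Bbb A$ by an open disk, obtaining $S^2=\Bbb A\cup D_0\cup D_1$ with $D_0,D_1$ disjoint open disks whose boundaries are the two components of $\partial\Bbb A$. Then I would extend $\mathcal{M}$ to a decomposition $\mathcal{M}^*$ of $S^2$ by adjoining every singleton $\{p\}$ with $p\in D_0\cup D_1$. Since the original $\mathcal{M}$ is upper semi-continuous with each element lying in a topological disk in $\Bbb A$, a direct check shows that $\mathcal{M}^*$ is still upper semi-continuous, and every non-degenerate element of $\mathcal{M}^*$ sits inside a topological disk in $S^2$.

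With this reduction in hand, I would invoke the sphere version of Moore's theorem to obtain a continuous monotone surjection $\Phi\colon S^2\to S^2$ such that for every $p\in S^2$, the fibre $\Phi^{-1}(p)$ is exactly one element of $\mathcal{M}^*$. Because all elements of $\mathcal{M}^*$ inside $D_0\cup D_1$ are singletons, $\Phi$ restricts to a homeomorphism on the open set $D_0\cup D_1$. Hence $\Phi(D_0)$ and $\Phi(D_1)$ are two disjoint open disks in $S^2$, and their common complement $\Phi(\Bbb A)$ is a compact set homeomorphic to a closed annulus by the Sch{\"o}nflies theorem. Post-composing $\Phi$ with a homeomorphism of $S^2$ that carries $\Phi(\Bbb A)$ back onto the original $\Bbb A$, and restricting to $\Bbb A$, yields the desired continuous monotone surjection $\phi\colon\Bbb A\to\Bbb A$ whose point-preimages are precisely the elements of $\mathcal{M}$.

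The main obstacle is to verify upper semi-continuity of $\mathcal{M}^*$ at points on $\partial\Bbb A$: an element $M\in\mathcal{M}$ that touches $\partial\Bbb A$ must admit arbitrarily small $\mathcal{M}^*$-saturated neighbourhoods in $S^2$, not merely in $\Bbb A$. This is exactly where the hypothesis that each $M\in\mathcal{M}$ lies in a topological disk is essential: a disk $\Delta\subset\Bbb A$ containing $M$ can be thickened across the boundary into $D_0\cup D_1$, and by upper semi-continuity of $\mathcal{M}$ inside $\Bbb A$ the thickened disk may be shrunk until it is $\mathcal{M}$-saturated in $\Bbb A$, at which point, adjoining the singletons in $D_0\cup D_1$ it contains makes it $\mathcal{M}^*$-saturated in $S^2$.

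An alternative route, avoiding the passage through $S^2$, would be to verify Bing's shrinking criterion directly: for each $\varepsilon>0$ and each open cover $\mathcal U$ of the non-degenerate elements of $\mathcal M$, one produces a homeomorphism of $\Bbb A$ that shrinks each element of $\mathcal M$ to diameter less than $\varepsilon$ while moving points only within elements of $\mathcal U$. The shrinking can be built disk by disk using the Sch{\"o}nflies theorem inside each containing disk, but the reduction to the sphere version above is shorter once the classical theorem is quoted from \cite{Daverman}.
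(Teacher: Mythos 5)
This lemma is not proved in the paper at all: it is quoted verbatim as Theorem 1 of Section 25 in Daverman's book (Moore's decomposition theorem), so there is no in-paper argument to compare yours against. Your reduction to the sphere version is the standard route and the upper semi-continuity check for $\mathcal M^*$ is correct (indeed easier than you suggest: if $V_0=W\cap\Bbb A$ is an $\mathcal M$-saturated relative neighbourhood of $M$ inside $U$, then $W\cap U$ is already $\mathcal M^*$-saturated and open in $S^2$, since the only non-degenerate elements meeting it lie in $V_0$; the disk hypothesis plays no role in this step).

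There are, however, two real gaps, both located exactly where the lemma's statement is itself too loose. First, the sphere version of Moore's theorem requires each element to be a compact connected set that does \emph{not separate} $S^2$ (equivalently, to be pointlike/cellular); ``contained in a topological disk'' does not imply this (a circle is contained in a disk), so your invocation of the classical theorem needs the nonseparating hypothesis made explicit. Second, your Sch\"onflies step requires $\overline{\Phi(D_0)}$ and $\overline{\Phi(D_1)}$ to be disjoint closed disks bounded by the simple closed curves $\Phi(\partial D_i)$, which forces $\Phi$ to be injective on $\partial\Bbb A$ and no element to meet both boundary circles. With the definition as written this can fail: collapsing a single arc with both endpoints on the same boundary circle (such an arc lies in a topological disk and gives a usc decomposition) produces a wedge of two disks glued to the rest, not an annulus; collapsing a spanning arc is worse. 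So as literally stated the lemma is false, and your proof breaks at the Sch\"onflies step for such decompositions. Under the intended reading --- non-degenerate elements are cellular subsets of the open annulus $S^1\times(0,1)$, which is the only situation the paper ever uses (the collapsed sets are the pseudo-arcs $g^n(A)\subset Q\subset S^1\times(0,1)$) --- both objections disappear and your argument is complete.
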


\subsection{Circular Chains and Pseudo-circles}
Let $\Bbb Z_m$ denote 
the finite additive Abelian group
 $\Bbb Z_m=\{\overline 0,\cdots, \overline{m-1}\}$. 
 For any two elements $\overline i,\overline j \in \Bbb Z_m$, we use the convenient 
 notation $\rho$ for their distance. More precisely,
 denote $\rho(\overline{i},\overline{j})=\min( |i-j|, |m-|i-j||)$.
\begin{definition}
 For $m\geq 3$, a \textit{circular chain} 
is a finite family of open subset 
$\mathcal{D}=\{D_{\overline\ell}\}_{\overline \ell \in \Bbb Z_m}$, not necessarily connected, 
such that
\begin{equation}
D_{\overline \ell} \cap D_{\overline k} \ne \emptyset \text{ if and only if } 
\rho (\overline k, \overline \ell)\leq 1.
\end{equation}
Each element $D_{\overline \ell}$ is called a link.
\end{definition}


\begin{definition}
Let $\mathcal{D}=\{D_{\overline \ell} : \overline \ell \in \Z_m\}$ and
$\mathcal{C}=\{C_{\overline \ell} : \overline \ell \in \Z_s\}$ 
be two circular chains. 
We say $\mathcal D$ is \textit{crooked inside} 
$\mathcal{C}$
if there exists a map $\pi \colon \Z_m\to \Z_s$
with the following properties. 
\begin{enumerate}[(i)]
	\item for any $\overline i \in \Bbb Z_m$,
	the closure of $D_{\overline i}$ is contained in $C_{\pi(\overline i)}$.
	\item with properly chosen circular orders,
	let 
	$\overline i< \overline j$ 
	be two elements in $\Bbb Z_m$,
	such that $\pi(\overline i) \leq \pi(\overline j)$. 
	Suppose for any $\overline i<\overline k< \overline j$,
	$\pi(\overline i) \leq \pi(\overline k) \leq \pi(\overline j)$. 
	Then there exist
	$\overline i<\overline u<\overline v<\overline j$,
	such that, 
	\begin{align}
	& \rho(\pi(\overline i),\pi(\overline v))\leq 1. \\
	& \rho(\pi(\overline j),\pi(\overline u))\leq 1.
	\end{align}
	\end{enumerate}
\end{definition}

 We call a continuum $X$ \textit{circle-like} if for any $\varepsilon>0$, 
 $X$ can be covered by a circular chain $\mathcal C$ with $\mesh(\mathcal C)<\varepsilon$.
   
  \begin{definition}[See~\cite{Bing}, and see~\cite{Fe} for the uniqueness part.] \label{defi_pseudo_circle}
  The \textit{pseudo-circle} is the unique circle-like plane separating
  continuum that can be covered by a 
 decreasing family of circular chains 
 $\{\mathcal{C}_{n}\}_{n \geq 1}$,
  such that $\mathcal{C}_{n+1}$ is crooked inside 
  $\mathcal{C}_{n}$ for each $n\geq 0$, 
  and $\mesh (\mathcal C_n)\to 0$ as $n$ tends to $\infty$.
  \end{definition}

   \begin{Remark}\label{pseudo_circle_characterization}
 Alternatively, the pseudo-circle can also be characterised as circle-like, 
 hereditarily indecomposable plane separating continuum which can be embedded 
  in any two dimensional manifold (see~\cite{BingEmbedding}). 
 \end{Remark}

  \begin{Remark}\label{pseudo_arc} A pseudo-arc is similarly defined
  replacing the objects 
  ``circular chain" and 
  ``circle-like continua"
   by 
  ``linear chain" and 
  ``arc-like continua", respectively. 
  For precise definitions, see for example the book~\cite{Continuum}. 
  Here we want to mention several important facts. 
  First, the pseudo-arc is also unique up to homeomorphism (see~\cite{Bing}).
 Then it follows from both definitions that
  any non-degenerate proper subcontinuum 
  of a pseudo-circle is a pseudo-arc. 
  Another useful fact is that, 
  any non-degenerate subcontinuum of a 
  pseudo-arc is also a pseudo-arc (This is the main theorem of~\cite{Moise}).
 \end{Remark}

We also need the following Theorem from~\cite{Judy}.
\begin{lemma}[Theorem 3 of~\cite{Judy}]\label{lemma_of_composant}
Let $f$ be a homeomorphism of the pseudo-circle $P$.  Let $C$ 
be a composant of $P$ 
and suppose $f(C)=C$. Then $f$ admits a fixed point in $C$.
\end{lemma}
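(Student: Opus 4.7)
The strategy is to produce a proper subcontinuum of $P$ that lies in $C$ and is invariant under $f$, and then invoke the fact that every pseudo-arc has the fixed-point property (Hamilton's theorem on chainable continua); by Remark~\ref{pseudo_arc}, every non-degenerate proper subcontinuum of $P$ is a pseudo-arc, so such an invariant subcontinuum will immediately yield the desired fixed point in $C$.

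The main leverage is the hereditary indecomposability of $P$: any two subcontinua of $P$ that share a point must be nested, since otherwise their union would be a decomposable subcontinuum of $P$. Thus, for any nonempty set $S\subseteq P$ which fits inside at least one proper subcontinuum of $P$, the family of all proper subcontinua containing $S$ is linearly ordered by inclusion, and its directed intersection is itself a proper subcontinuum -- the \emph{smallest} proper subcontinuum containing $S$. Apply this to $S = O(x) := \{f^n(x) : n \in \Z\}$ for a point $x\in C$ (which may be assumed not to be a fixed point) whose entire orbit fits inside some proper subcontinuum of $P$, and let $K$ denote the resulting minimal subcontinuum. Since $f(K)$ is another proper subcontinuum of $P$ and $f(O(x)) = O(x) \subseteq f(K)$, minimality gives $K \subseteq f(K)$; the symmetric argument applied to $f^{-1}$ yields $f(K)\subseteq K$, hence $f(K)=K$. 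Because $O(x)$ has at least two points, $K$ is a non-degenerate pseudo-arc, and the fixed-point property applied to $f|_K$ delivers a fixed point of $f$ in $K \subseteq C$.

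The chief obstacle is therefore exhibiting a point $x\in C$ whose full orbit $O(x)$ is contained in some proper subcontinuum of $P$. Hereditary indecomposability alone is not enough: for a minimal homeomorphism of $P$, no composant can be invariant precisely because every orbit escapes every pseudo-arc. Consequently the planar embedding of $P$ must enter the argument in an essential way. Writing the composant as an increasing union of pseudo-arcs $C = \bigcup_n K_n$, the goal is to show that the hypothesis $f(C) = C$ prevents every orbit from escaping every $K_n$. A natural route is via the induced action of $f$ on the circles of prime ends of the two complementary domains of $P$ in $\R^2$, together with the planar correspondence between composants of an indecomposable continuum and its accessible prime ends: the hypothesis that $C$ is fixed by $f$ should constrain the induced prime-end rotation in a way that must force some orbit to remain bounded in the composant filtration. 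Translating the combinatorial condition $f(C) = C$ into this concrete geometric conclusion about the existence of a bounded orbit is the delicate and central point of the proof.
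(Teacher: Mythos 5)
This lemma is quoted in the paper as Theorem~3 of Kennedy--Rogers~\cite{Judy}; the paper gives no proof of its own, so your attempt can only be judged on its internal completeness. The first half of your argument is correct and cleanly executed: in a hereditarily indecomposable continuum any two subcontinua with a common point are nested, so a set that lies in some proper subcontinuum has a \emph{smallest} proper subcontinuum containing it; applying this to a full orbit $O(x)$ and using the same minimality for $f$ and $f^{-1}$ does produce an invariant proper subcontinuum $K\subseteq C(x)=C$, which is a pseudo-arc by Remark~\ref{pseudo_arc} and hence has the fixed-point property by Hamilton's theorem on chainable continua. That reduction is sound.

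The gap is that the statement you reduce to --- the existence of a point of $C$ whose entire orbit is contained in a proper subcontinuum of $P$ --- is precisely where the entire content of the theorem lies, and you do not prove it; you only name a strategy (prime ends of the complementary domains) without carrying out any step of it. As you yourself observe, hereditary indecomposability alone cannot supply such a point, since a homeomorphism can in principle ``translate'' along the filtration $C=\bigcup_n K_n$ the way a translation acts on $\Bbb R=\bigcup_n[-n,n]$; ruling this out is exactly the hard planar/cofrontier input. Moreover the prime-end route has an unaddressed obstruction: an abstract self-homeomorphism of $P$ is not given as the restriction of a homeomorphism of the plane or of the closures of the complementary domains, so it does not obviously induce any map on the circles of prime ends; establishing such an extension (or circumventing it, as Kennedy and Rogers do with a different analysis of invariant composants) is itself nontrivial. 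As it stands, the proposal is an honest reduction plus an unproven central claim, not a proof.
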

\subsection{The Denjoy-Rees Technique: Settings.}\label{Rees_Conditions}
Denote by $\Bbb A= S^1\times [0,1]$
for the closed annulus. Let 
$f \colon \Bbb A\to \Bbb A$ 
be a homeomorphism which preserves both the
orientation and the boundaries.  The following definitions are adopted from \cite{Cro}.

\begin{definition}
For an integer $p\geq 1$, 
a finite family $\E$
of closed disks, contained in $S^1\times (0,1)$,
 is called \textit{$p$-iterable} if for any 
 $X,Y\in \E$ and  
integers $-p\leq k,s\leq p$, 
either  $f^k(X)=f^s(Y)$
or $f^k(X)\cap f^s(Y)=\emptyset$. 
\end{definition}
For any $p$-iterable family of 
closed disks $\E$ and any $0\leq n\leq p$,
we denote
\begin{equation}
\E^n=\bigcup_{|k|\leq n} f^k(\E),
\end{equation}
where 
$f(\E)=\{f(X) \big | X\in \E\}$. 
For any $0\leq n < p$,
define an oriented graph $G(\E^n)$, 
where the vertices
 are elements of $\E^n$,
 and there is an edge from $X$ to $Y$
if and only if $f(X)=Y$. 
For a map $h$ such that $h(X)\subset X$ and $h(Y)\subset Y$, we say that it commutes with $f$ along edge $X\to Y$
if $f\circ h(x)=h\circ f(x)$.


\begin{definition}
Let $\E,\F$ be two finite families 
of closed disks, all contained in the open set $S^1\times (0,1)$. 
We say $\F$ refines $\E$ if the following conditions hold.
\begin{enumerate}
\item every element of $\E$ contains at least one element of $\F$.
\item for any 
$X\in \E$, $Y\in \F$,
 either $X\cap Y=\emptyset$ or $Y\subset \Int (X)$.
\end{enumerate}
\end{definition}

\begin{definition}
Let $\E,\F$ be 
two families of closed disks.
For an integer $p\geq 0$,
we say 
 \textit{$\F$ is compatible with $\E$ for $p$ iterates},
 if the following conditions hold.
 \begin{enumerate}
 \item $\E$ is $p$-iterable.
 \item  $\F$ is $(p+1)$-iterable.
 \item  $\rs (\F) \subset \rs (\E)$, and $\F^{p+1}$ refines $\E^p$. 
 \item  For every $k$ with $|k|\leq p$, 
  \begin{equation}
  \rs(\F^{p+1}) \cap f^k( \rs(\E) ) = f^k( \rs(\F) ).
  \end{equation}
\end{enumerate}
\end{definition}

Let $\{\E_{(n)}\}_{n\geq 0}$ denote 
a sequence of families of closed disks. 
The following list of hypotheses 
was formulated in Section 2 of~\cite{Cro}.
\begin{enumerate}
\item [$\mathbf{A_1}$.] For every $n \geq 0$,
       \begin{enumerate}
	\item $\E_{(n)}$ is 
	$(n+1)$-iterable and the graph $G(\E_{(n)}^n)$ has no cycle.
	\item $\E^{n+1}_{(n)}$ refines $\E_{(m)}^{m+1}$ for any $0\leq m <n$;
	\item $\E_{(n)}$ is compatible with $\E_{(n-1)}$ for $n$ iterates.
       \end{enumerate}
\item [$\mathbf{A_2}$.]  The following holds. 
\begin{equation}
\lim_{n\to \infty}\mesh (\E_{(n)}^n) = 0.
\end{equation}
\end{enumerate}
Next, we want to choose a sequence of 
homeomorphisms $\{h_n\}_{n\geq 1}$, 
such that the following hold for every $n\geq 1$.
\begin{itemize}
    \item[$\mathbf{B_{1}}$.] 
the closure
$\overline{\{x : h_n(x)\neq x\}}$ is contained
in the set $\rs (\E_{(n-1)}^{n-1})$.
    \item[$\mathbf{B_{2}}$.] 
$h_n$ and $f$ commute 
along edges of the graph $G(\E_{(n-1)}^{n-1})$.
    \item[$\mathbf{B_3}$.] the following holds. 
\begin{equation}
\lim_{n\to \infty}
 \mesh \big(h_1^{-1}\circ \cdots \circ h_{n-1}^{-1}
 ( \E_{(n)}^{n+1} \backslash \E_{(n)}^{n-1}) \big)=0.
\end{equation}
\end{itemize}
Whenever we have such a sequence of homeomorphisms $h_n$, for any $n\geq 1$,
we define the homeomorphisms 
$\psi_n, g_n$ as follows. 
\begin{align}
\psi_n &=h_n \circ \ldots \circ h_1.\\
g_n &=\psi_n^{-1}\circ f \circ \psi_n.
\end{align}
Finally we set 
$\psi_0=\id$ and $g_0=f$. 

\subsection{The Denjoy-Rees Technique: Results.}

The following results 
proved in \cite{Cro} 
form the starting point of 
our proof. 
They ensure 
proper convergence 
of the sequence of conjugated homeomorphisms, 
while the conjugacies
converge to a continuous map, which provides a semi-conjugacy.
Later we will have to adjust this approach, when defining maps $h_n$ to ensure sufficiently fast convergence. This will be possible due to different approach
to sets $X_n$ in \eqref{eq:211}. For purposes of \cite{Cro} sets $X_n$ were specified as the beginning, and then the sequence was refined sufficiently fast. Unfortunately, this approach
is likely to lead to a situation as on Fig.~\ref{fig:chains}, destroying chainablity or other properties. To deal with this problem, sets $X_n$ will be defined dynamically as the construction progresses (and so the continuum in the fiber is unknown to the very end of the construction).

\begin{figure}[h!]
	\includegraphics[width=0.6\textwidth]{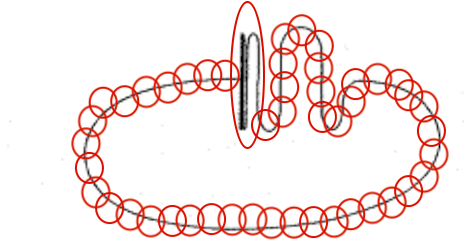}
	\caption{Fiber nowhere dense in $\mathbb{T}^2$ but large in ``blown up'' continuum.}\label{fig:chains}.
\end{figure}

\begin{lemma}[Proposition~3.1 in~\cite{Cro}]\label{lem:conv}
Assume that hypotheses $\mathbf{A_{1}}, \mathbf{A_2}, \mathbf{B_{1}}, \mathbf{B_2}, \mathbf{B_3}$ are satisfied. Then:
\begin{enumerate}
	\item The sequence $\{\psi_n\}_{n\geq 0}$ 
	converges uniformly to a continuous surjective map 
	$\psi \colon \Bbb A\to \Bbb A$.
	\item 
	The sequences $\{g_n\}_{n\geq 1}$ 
	and $\{g_n^{-1}\}_{n\geq 1}$
	 converge uniformly to a homeomorphism  
	$g \colon \Bbb A\to \Bbb A$ 
	and its inverse $g^{-1}$, respectively.
	\item The homeomorphism $f$
	is semi-conjugate to $g$ via the map $\psi$. 
	\end{enumerate}
\end{lemma}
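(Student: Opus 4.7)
The plan is to establish (1)--(3) in order, letting the nested-disk structure of $\{\E_{(n)}\}$ carry the geometry. Once uniform convergence of $\psi_n$ and $g_n$ is in hand, claim (3) drops out by passing to the limit in the intertwining relation $\psi_n\circ g_n=f\circ\psi_n$.

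For (1), I would first observe that each $h_n$ preserves every element $Z\in\E_{(n-1)}^{n-1}$ setwise. By $\mathbf{B_1}$ the support of $h_n$ sits inside $\rs(\E_{(n-1)}^{n-1})$, while the $n$-iterability of $\E_{(n-1)}$ in $\mathbf{A_1}$(a) makes the members of $\E_{(n-1)}^{n-1}$ pairwise disjoint closed disks; since $h_n$ is a homeomorphism fixing everything outside this disjoint union, continuity forces $h_n(Z)=Z$ for each component $Z$. Consequently, whenever $\psi_{n-1}(x)\in Z$ we have $\psi_n(x)=h_n(\psi_{n-1}(x))\in Z$, so $d(\psi_n(x),\psi_{n-1}(x))\le\diam(Z)\le\mesh(\E_{(n-1)}^{n-1})$. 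The critical step is the nesting of the successive trapping disks: if $k>n$ is the next index at which the orbit is actually moved, then $\psi_{k-1}(x)=\psi_n(x)$ lies in some $Z'\in\E_{(k-1)}^{k-1}$, and the refinement $\mathbf{A_1}$(b) combined with the iterability--disjointness dichotomy forces the unique element of $\E_{(n-1)}^{n-1}$ meeting $Z'$ to be $Z$ itself, hence $Z'\subset Z$. Iterating, the tail $\{\psi_m(x)\}_{m\ge n}$ is trapped in a nested sequence of disks whose diameters vanish by $\mathbf{A_2}$. Fixing $\varepsilon>0$ and $N$ with $\mesh(\E_{(j)}^j)<\varepsilon$ for all $j\ge N$, the first trapping disk past stage $N$ has diameter $<\varepsilon$, giving $\|\psi_n-\psi_m\|_\infty<\varepsilon$ for $m\ge n\ge N$. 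Thus $\{\psi_n\}$ is uniformly Cauchy with continuous limit $\psi$; surjectivity follows by compactness (extract a convergent subsequence from the preimages $\psi_n^{-1}(y)$ of any $y\in\Bbb A$).

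For (2), I would write $g_n=\psi_{n-1}^{-1}\circ(h_n^{-1}\circ f\circ h_n)\circ\psi_{n-1}$ and apply $\mathbf{B_2}$: the inner conjugate $h_n^{-1}\circ f\circ h_n$ equals $f$ both off $\rs(\E_{(n-1)}^{n-1})$ and on every $Z\in\E_{(n-1)}^{n-1}$ whose outgoing edge $Z\to f(Z)$ belongs to $G(\E_{(n-1)}^{n-1})$. The residual discrepancy is localised on the disks whose outgoing edge is missing, which by construction sit one $f$-iterate from the ``new layer'' $\E_{(n-1)}^{n}\setminus\E_{(n-1)}^{n-1}$. Pulling this back through $\psi_{n-1}$ and using $\mathbf{B_3}$ together with uniform continuity of $f$ controls $\|g_n-g_{n-1}\|_\infty$ by a vanishing quantity; a symmetric analysis of $h_n\circ f^{-1}\circ h_n^{-1}$ bounds $\|g_n^{-1}-g_{n-1}^{-1}\|_\infty$ in the same way. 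Both sequences are then uniformly Cauchy with continuous limits $g,\tilde g$ satisfying $g\circ\tilde g=\tilde g\circ g=\id$, so $g$ is a homeomorphism and $\tilde g=g^{-1}$. Passing to the limit in $\psi_n\circ g_n=f\circ\psi_n$ yields $\psi\circ g=f\circ\psi$, establishing (3).

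I expect the main obstacle to be the passage from pointwise to \emph{uniform} convergence in (1): the crude estimate $d(\psi_n,\psi_{n-1})\le\mesh(\E_{(n-1)}^{n-1})$ is not summable, so it is the nested-disk trapping mechanism---not summability---that supplies a single threshold $N$ working for every $x\in\Bbb A$ simultaneously. A secondary subtlety in (2) is that uniform convergence of $g_n$ alone does not force invertibility of the limit, so the symmetric control on $g_n^{-1}$, which is precisely why $\mathbf{B_3}$ is phrased in terms of the pull-back $\psi_{n-1}^{-1}$ of the enlarged layer $\E_{(n)}^{n+1}\setminus\E_{(n)}^{n-1}$ rather than $\E_{(n-1)}^{n-1}$ itself, is what closes the argument.
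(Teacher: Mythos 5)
The paper does not prove this lemma at all: it is imported verbatim as Proposition~3.1 of \cite{Cro}, so the only comparison available is with the Denjoy--Rees argument there. Your treatment of items (1) and (3) reconstructs that argument correctly: the observation that $h_n$ preserves each disk of $\E_{(n-1)}^{n-1}$ setwise (which, to be safe, needs the small connectivity argument that each component of $\{h_n\neq\id\}$ together with its boundary of fixed points lands back in the same disk of the disjoint union), the trapping of the tail $\{\psi_m(x)\}_{m\ge n}$ in a nested sequence of disks via $\mathbf{A_1}$(b), and the passage to the limit in $\psi_n\circ g_n=f\circ\psi_n$ are exactly the right mechanisms, and your closing remark correctly identifies why summability is not the issue in (1).

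Item (2), however, contains a genuine gap, and it is precisely the pitfall you yourself flag for item (1). You localise the discrepancy between $g_n$ and $g_{n-1}$ on the pull-back of the ``new layer'' and bound it by a quantity tending to $0$ via $\mathbf{B_3}$, and then assert ``both sequences are then uniformly Cauchy.'' Vanishing successive differences do not imply the Cauchy property; as written, the step fails. What is needed is the same trapping argument as in (1): writing $g_{n+1}=H_{n+1}^{-1}\circ g_n\circ H_{n+1}$ with $H_{n+1}=\psi_n^{-1}\circ h_{n+1}\circ\psi_n$, one shows that whenever $g_k(x)\neq g_{k-1}(x)$ first occurs past stage $N$, both points lie in $\psi_{k-1}^{-1}(Y)$ for a single element $Y$ of the layer $\E_{(k-1)}^{k}\setminus\E_{(k-1)}^{k-2}$, and that the sets $\psi_m^{-1}(W)$, $W\in\E_{(m)}^{m+1}$, are nested in $m$ (because $h_{m+1}$ preserves or fixes every element of $\E_{(m)}^{m+1}$ and $\mathbf{A_1}$(b) refines), so the entire tail $\{g_m(x)\}_{m\ge k}$ stays inside $\psi_{k-1}^{-1}(Y)$, whose diameter is controlled by $\mathbf{B_3}$. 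Without this, invoking $\mathbf{B_3}$ only bounds one increment at a time. A secondary, more minor inaccuracy: it is not true that $h_n^{-1}\circ f\circ h_n=f$ off $\rs(\E_{(n-1)}^{n-1})$, since for $z$ outside the support one still has $h_n^{-1}(f(z))\neq f(z)$ whenever $f(z)$ lands in the support; the correct statement is that $h_n$ and $f$ commute at $z$ except when $z$ or $f(z)$ sits in a disk whose relevant edge is missing from $G(\E_{(n-1)}^{n-1})$, and these exceptional disks are the ones absorbed into the layer controlled by $\mathbf{B_3}$.
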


\begin{lemma}[Proposition~3.4 in~\cite{Cro}]\label{lem:fibres}
Assume that hypotheses $\mathbf{A_{1}},
 \mathbf{A_2}, \mathbf{B_{1}}, \mathbf{B_2}, \mathbf{B_3}$ are satisfied.
Let $K=\bigcap_{n \geq 0}\rs(\E_{(n)})$.
\begin{enumerate}
	\item Suppose $x\in \Bbb A$ 
	and  
	there exists $m\in \Z$ such that $x\in f^m(K)$. 
	Let $\{X_n\}_{n\geq |m|} \subset \E_{(n)}^{|m|}$ 
	denote the decreasing sequence of closed disks
	containing $x$.
	Then
	\begin{equation}
	\psi^{-1}(x)=\bigcap_{n\geq |m|}\psi_n^{-1}(X_n).\label{eq:211}
	\end{equation}
	\item For every $x$ which does not belong to the orbit of $K$,
	$\psi^{-1}(x)$ is a singleton.
\end{enumerate}
\end{lemma}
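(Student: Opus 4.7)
The plan is to treat parts (1) and (2) separately. Both rely on nested-sequence arguments that exploit the invariance properties of the perturbations $h_n$ forced by $\mathbf{B_2}$.

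\textbf{Part (1).} First I would verify that $\{X_n\}_{n\geq |m|}$ is strictly nested, with $X_{n+1}\subset \Int(X_n)$: by compatibility of $\E_{(n+1)}$ with $\E_{(n)}$, $\E_{(n+1)}^{n+2}$ refines $\E_{(n)}^{n+1}$, and by $(n+1)$-iterability the family $\E_{(n)}^{n+1}$ consists of pairwise disjoint or identical closed disks, so the unique member $X_n\ni x$ contains $X_{n+1}$ in its interior. In particular $x\in \Int(X_n)$ and $\diam(X_n)\to 0$ by $\mathbf{A_2}$. The key invariance is then that $h_{n+j}(X_n)\subset X_n$ for every $j\geq 1$: any disk $D\in \E_{(n+j-1)}^{n+j-1}$ meeting $X_n$ must lie inside $X_n$ (by $\mathbf{A_1}$(b), $D$ sits inside some element of $\E_{(n)}^{n+1}$, which must be $X_n$ by disjointness), and $\mathbf{B_2}$ ensures $h_{n+j}$ maps each such $D$ to itself while being the identity outside $\rs(\E_{(n+j-1)}^{n+j-1})$. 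Symmetrically, if $\psi_n(y)\notin X_n$ then $\psi_k(y)\notin X_n$ for all $k\geq n$, since disks of $\E_{(m)}^m$ that meet $X_n$ sit inside $X_n$, while those disjoint from $X_n$ are preserved as disjoint from $X_n$.

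Both inclusions in (1) now drop out. If $y\in \bigcap_n \psi_n^{-1}(X_n)$ then $\psi_n(y)\in X_n\ni x$ with $\diam(X_n)\to 0$, so $\psi_n(y)\to x$; combined with $\psi_n\to \psi$ uniformly (Lemma \ref{lem:conv}), this gives $\psi(y)=x$. Conversely, if $\psi(y)=x$ and $n\geq |m|$, the convergence $\psi_k(y)\to x\in \Int(X_n)$ puts $\psi_k(y)\in X_n$ for all sufficiently large $k$, and the contrapositive of the forward-invariance above forces $\psi_n(y)\in X_n$.

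\textbf{Part (2).} Since $\rs(\E_{(n)})$ is decreasing in $n$ (by compatibility condition (4) with $k=0$) with intersection $K$, the assumption $x\notin \bigcup_k f^k(K)$ yields, for each $N$, some $n(N)\geq N$ together with a neighborhood $U_N$ of $x$ satisfying $U_N\cap \rs(\E_{(n(N))}^N)=\emptyset$. For any $y\in \psi^{-1}(x)$ the sequence $\psi_n(y)$ converges to $x$, hence eventually lies in $U_N$; one then uses the refinement structure to argue that subsequent supports $\rs(\E_{(n)}^n)$ cannot reach $\psi_n(y)$ without intersecting $\rs(\E_{(n(N))}^N)$, so $\psi_n(y)$ becomes eventually constant and therefore equals $x$. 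This forces $y=\psi_n^{-1}(x)$, which is uniquely determined by $x$.

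\textbf{Main obstacle.} The delicate point is Part (2): the sets $\rs(\E_{(n)}^n)$ are not monotone in $n$, since each step introduces new boundary iterates $f^{\pm n}(\E_{(n)})$, and one cannot simply invoke decreasing supports. Compatibility condition (4) must be used carefully to show that these newly appearing disks do not accumulate at $x$ when $x$ lies outside the orbit of $K$, so that some neighborhood of $x$ is eventually disjoint from all supports; once this localization is secured, the stabilization argument concludes the proof.
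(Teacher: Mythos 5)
The paper does not actually prove this lemma; it is quoted from \cite{Cro} (Proposition 3.4 there), so your attempt has to be measured against the argument given in that reference. Your Part (1) is essentially that argument and is correct: the nesting $X_{n+1}\subset\Int(X_n)$, the fact that every disk of $\E_{(n+j-1)}^{n+j-1}$ meeting $X_n$ lies in $\Int(X_n)$ (this is clause (2) of the definition of ``refines'' applied via $\mathbf{A_1}$(b), rather than ``$D$ sits inside \emph{some} element'' as you phrase it), and the resulting two-sided invariance of $X_n$ under the later $h$'s give both inclusions exactly as you describe.

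Part (2) has a genuine gap. Your strategy is to produce a neighborhood of $x$ that is eventually disjoint from the supports $\rs(\E_{(n)}^n)$, so that $\psi_n(y)$ stabilizes. Such a neighborhood need not exist, and in the application of this paper it never does for $x\in P$: the support at stage $n$ contains the orbit segment $\bigcup_{|j|\le n}f^j(\overline{V_{s_n}})$, and since $p_\ast\in V_{s_n}$ and $f|_P$ is minimal, these segments accumulate on \emph{every} point of $P$ as $n\to\infty$, while all but countably many points of $P$ lie outside the orbit of $K=\{p_\ast\}$. Compatibility condition (4) cannot repair this: it only constrains how $\rs(\E_{(n)}^{n+1})$ meets the sets $f^k(\rs(\E_{(n-1)}))$ for $|k|\le n$, and places no restriction on where the newly appearing disks $f^j(E)$ with $N<|j|\le n$ sit in the annulus. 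Consequently $\psi_n(y)$ need not become eventually constant, and your stabilization argument collapses. The correct mechanism is hypothesis $\mathbf{B_3}$, which your proof never invokes: it bounds the diameters of $\psi_{n-1}^{-1}(D)$ for the new disks $D\in\E_{(n)}^{n+1}\setminus\E_{(n)}^{n-1}$, and, combined with the refinement structure of $\mathbf{A_1}$(b) in a telescoping estimate, shows that although the supports return arbitrarily close to $x$ infinitely often, the induced displacements of $\psi_n^{-1}$ near $x$ tend to zero, which is what forces the fiber over a point outside the orbit of $K$ to degenerate. Since Part (2) is false without $\mathbf{B_3}$, an argument that does not use it cannot be completed.
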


\section{Minimal non-invertible map on the pseudo-circle}

The main result of this section is the following 
Theorem. 

\vphantom{}

\begin{theorem}\label{main_section}
There exists a homeomorphism $f\colon \Ann\to \Ann$ 
with an invariant pseudo-circle $Q \subset \Bbb A$ 
such that $(Q,f)$ 
is minimal and there exists a  
pseudo-arc $A\subset Q$
such that 
\begin{equation}\label{shrinking}
\lim_{|n|\to \infty}\text{diam} \big(   f^n(A) \big)=0.
\end{equation}
\end{theorem}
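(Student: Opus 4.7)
The plan is to apply the Denjoy-Rees machinery of Section~2 starting from Handel's minimal homeomorphism $R:\Ann\to\Ann$ preserving an invariant pseudo-circle $P\subset\Ann$. In the notation of Section~2.3 this $R$ plays the role of ``$f$'', and the homeomorphism ``$g$'' produced by Lemma~\ref{lem:conv} will be the $f$ of the theorem. Using Lemma~\ref{Internal_Composant} I pick an internal composant $C$ of $P$ and an aperiodic point $x_0\in C$ whose $R$-orbit will be the one that is ``blown up''; internality will be crucial for preserving circle-likeness. The pseudo-arc $A$ will be the fibre $\psi^{-1}(x_0)$, and the invariant pseudo-circle $Q$ will be $\psi^{-1}(P)$.

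I build the families $\E_{(n)}$ and the homeomorphisms $h_n$ satisfying $\mathbf{A_1}$--$\mathbf{B_3}$ by induction. Each $\E_{(n)}$ consists of a single closed disk $X_n\ni x_0$, chosen so that the iterates $R^k(X_n)$, $|k|\leq n+1$, are pairwise disjoint, giving $(n+1)$-iterability. The crucial departure from \cite{Cro} is that $X_n$ is chosen \emph{adaptively}, after $\psi_{n-1}$ has been constructed: one first selects a very fine circular chain $\mathcal C_n$ of $\Ann$ covering $P$ and crooked inside its predecessor (possible by Definition~\ref{defi_pseudo_circle} applied to $P$), and then picks $X_n$ so that $\psi_{n-1}(X_n)$ lies inside a single link of $\mathcal C_n$ around $x_0$ and $\E_{(n)}^{n+1}$ refines $\E_{(n-1)}^n$ in a crooked way (Definition of crookedness inside, with the chain combinatorics inherited from $\mathcal C_n\prec \mathcal C_{n-1}$). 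The homeomorphism $h_n$ is taken to be the identity outside $\rs(\E_{(n-1)}^{n-1})$ (condition $\mathbf{B_1}$), commutes with $R$ along the edges of $G(\E_{(n-1)}^{n-1})$ ($\mathbf{B_2}$), and inside each disk it is a controlled ``stretch'' whose magnitude is chosen to force $\mesh\bigl(\psi_{n-1}^{-1}(\E_{(n)}^{n+1}\setminus\E_{(n)}^{n-1})\bigr)\to 0$ ($\mathbf{B_3}$). Mesh decay for $\mathbf{A_2}$ is ensured by the shrinking of the chains $\mathcal C_n$.

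Lemma~\ref{lem:conv} then provides $g=\lim g_n$ and $\psi=\lim\psi_n$ with $\psi\circ g=R\circ\psi$, and Lemma~\ref{lem:fibres} identifies the fibres. The main obstacle is to verify that $Q:=\psi^{-1}(P)$ is a pseudo-circle and $A:=\psi^{-1}(x_0)$ is a pseudo-arc. For this I would produce, from the adaptive chains above, a sequence of circular chains $\mathcal D_n$ covering $Q$ in $\Ann$ with $\mesh(\mathcal D_n)\to 0$ and $\mathcal D_{n+1}$ crooked inside $\mathcal D_n$, so that Definition~\ref{defi_pseudo_circle} identifies $Q$ with the pseudo-circle; Remark~\ref{pseudo_circle_characterization} and plane-separation inherited from $P$ via $\psi$ then close the case. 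Internality of $C$ enters exactly at this point: if $x_0$ lay in a non-internal composant, a subcontinuum of the blown-up fibre could ``cross'' chain links and destroy the refined circular structure, the scenario illustrated in Figure~\ref{fig:chains}. Since $A$ is then a nondegenerate proper subcontinuum of the pseudo-circle $Q$, Remark~\ref{pseudo_arc} makes $A$ a pseudo-arc. Minimality of $(Q,g)$ follows because $\psi|_Q\colon Q\to P$ is an almost $1$-$1$ extension of the minimal $(P,R)$ (Lemma~\ref{lem:fibres}(2)): any closed $g$-invariant $Z\subsetneq Q$ projects onto $P$ and contains every singleton fibre, so $Z$ is dense and equals $Q$. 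Finally, the shrinking $\diam(g^n(A))\to 0$ is arranged at the construction stage by requiring that $\diam(R^k(X_n))$ decays with $|k|$; standard continuity estimates then transfer this decay through $\psi$ to $g^n(A)=\psi^{-1}(R^n(x_0))$. The hardest technical point throughout is the simultaneous preservation of crookedness, circular-chain structure, and plane-separation under the adaptive insertion, which is precisely the issue flagged by the remark preceding Lemma~\ref{lem:conv}.
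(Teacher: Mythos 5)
Your overall strategy---Handel's homeomorphism, an adaptively built Denjoy--Rees tower, internal composants, minimality via an almost $1$-$1$ extension---is the paper's strategy, and several soft parts are fine (indecomposability of $Q$, minimality; even $\diam(g^n(A))\to 0$, which the paper also obtains by a soft upper semi-continuity argument rather than at the construction stage). But two steps have genuine gaps. First, nothing in your construction forces the fibre $\psi^{-1}(x_0)$ to be nondegenerate: conditions $\mathbf{A_1}$--$\mathbf{B_3}$ are all satisfied by $h_n=\id$, in which case $\psi=\id$ and every fibre is a point. Your ``controlled stretch'' is described only as serving $\mathbf{B_3}$, i.e.\ as making preimages \emph{small}, which is the opposite requirement. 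The paper has to work for this: it fixes $\gamma>0$ and at every stage re-adjusts $h_n$ so that $(h_n\circ\cdots\circ h_1)^{-1}(A_r)$ has diameter $>\gamma$ for a suitable small pseudo-arc $A_r\subset P$, so that the fibre $\bigcap_n U_{k_n}$ is a nested intersection of $2^{-n}$-chains, each crooked in its predecessor and of diameter $>\gamma$; pseudo-arcness of the fibre is read off from that, not deduced afterwards from $Q$ being a pseudo-circle.

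Second, your route to ``$Q$ is a pseudo-circle'' is to exhibit circular chains $\mathcal D_{n+1}$ crooked inside $\mathcal D_n$ covering $Q$ and invoke Definition~\ref{defi_pseudo_circle}. This is the step that does not go through as stated: the natural candidate is $\mathcal D_n=\psi_n^{-1}(\mathcal C_{k_n})$, but then $\mathcal D_{n+1}=\psi_n^{-1}\bigl(h_{n+1}^{-1}(\mathcal C_{k_{n+1}})\bigr)$, and the combinatorial pattern of $h_{n+1}^{-1}(\mathcal C_{k_{n+1}})$ inside $\mathcal C_{k_n}$ is \emph{not} the crooked pattern of $\mathcal C_{k_{n+1}}$ inside $\mathcal C_{k_n}$, because $h_{n+1}$ rearranges links exactly where the blow-up happens. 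The paper deliberately avoids claiming crookedness of the pulled-back circular chains: it proves only nesting and mesh $\to 0$ (hence circle-likeness), and establishes hereditary indecomposability by a separate continuum-theoretic argument (Lemma~\ref{hereditarily_indecomposable}) whose engine is not one internal composant of $P$ but a nested sequence of pseudo-arcs $A_0\supset A_1\supset\cdots$ with each $A_{k+1}$ inside an internal composant of $A_k$ (Lemma~\ref{Ap}); it then applies the characterization of Remark~\ref{pseudo_circle_characterization}. Your setup, which fixes only a single internal composant of $P$ containing $x_0$, does not supply the data for that argument; note also that internality is what yields nowhere-density of the fibre in $Q$ (Lemma~\ref{nowheredense}), the fact you use implicitly when asserting that the singleton fibres are dense in $Q$ in the minimality step.
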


The proof of Theorem~\ref{main_section} goes through the whole section. 
In fact we will prove more than the statement here. We will provide 
a semi-conjugacy between two annulus homeomorphisms.   
The conclusion of the theorem is based on the existence of such a semi-conjugacy.

Let us give an outline before getting into details. 
In Section~\ref{Handel_section}, we recall Handel's construction of an annulus 
homeomorphism $f$, 
which admits an invariant pseudo-circle $P\subset \Bbb A$. 
Then, we construct the semi-conjugacy in Section~\ref{semi} 
between $(\Bbb A,f)$ 
and $(\Bbb A,g)$ for some other homeomorphism $g$ on $\Bbb A$. 
This is the most delicate part of the construction.
In Section~\ref{Q_is_pseudo}, we show the pre-image of $P$ under the semi-conjugacy
is still a pseudo-circle, and, finally, the rest of the proof of Theorem~\ref{main_section}
is given in Section~\ref{end_minimal}.

Based on Theorem~\ref{main_section}, the proof of Theorem~\ref{main} is 
given in Section~\ref{final}.

\subsection{Minimal Homeomorphism on the Pseudo-circle}\label{Handel_section}
Here we define a homeomorphism $f$ on 
the closed annulus $\Bbb A$ which preserves a minimal 
invariant pseudo-circle $P$. 
The construction follows 
directly from
~\cite{Handel}, so 
we refer to this paper
for more details (and some pictures for the construction). 
Roughly speaking, the pseudo-circle in construction in \cite{Handel} is obtained as a nested sequence of annuli.
Annulus in $n$-th step of the construction 
is identified with $S^1\times [0,1]$ and then
circle $S^1$ is split into 
 exactly $m_n$ sub-intervals of equal lengths, say, $I^{(0)}_1,\cdots,I^{(0)}_{m_0}$.
This way we define circular chain $\mathcal C_n$, taking consecutive link
 of the form 
 $(I^{(0)}_k\cup I^{(0)}_{k+1}) 
 \times (0,1)$, for $k=1,\cdots, m_n-1$, 
 and putting as the last $(I^{(0)}_{m_n}\cup I^{(0)}_1) \times (0,1)$.
 This allows us to express the main result of \cite{Handel} in the following form.

\begin{lemma}\label{Handel} Let $\Bbb A=S^1\times [0,1]$.
There exists a sequence of circular chains $\{\mathcal C_n\}_{n\geq 0}$ with links whose closures are closed discs,
and a homeomorphism $f: \Bbb A\to \Bbb A$, with the following properties.
\begin{enumerate}
\item  The following conditions are satisfied. 
\begin{align}
      \overline {\rs (\mathcal C_0)} & \subset S^1 \times (0,1).\\
\overline{\rs (\mathcal C_{n+1})} & \subset \rs ( \mathcal C_n), \text{ for any }n\geq 0.\label{cn:subsets}
\end{align}
\item There is an $f$-invariant minimal pseudo-circle $P$
such that
\begin{equation}
P = \bigcap_{n\geq 0} \rs(\mathcal C_{n}) = \bigcap_{n\geq 0} \overline{\rs (\mathcal C_n)}.
\end{equation}
\end{enumerate}
\end{lemma}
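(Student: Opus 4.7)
The plan is to reformulate Handel's construction from \cite{Handel} in the language of circular chains. Handel produces a nested sequence of annular neighborhoods $A_0\supset A_1\supset \cdots$ in $S^1\times (0,1)$ whose intersection is the invariant pseudo-circle $P$, together with a boundary- and orientation-preserving homeomorphism $f\colon \Bbb A\to\Bbb A$ whose restriction to $P$ is minimal. My first step would be to identify each $A_n$ with $S^1\times [0,1]$, partition the core circle into $m_n$ equal arcs $I^{(n)}_1,\ldots,I^{(n)}_{m_n}$, and declare the $k$-th link of $\mathcal C_n$ to be the interior of $(I^{(n)}_k\cup I^{(n)}_{k+1})\times (0,1)$ with indices modulo $m_n$. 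By design, each link has closed disk closure and consecutive links overlap as required by the definition of a circular chain.

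Next I would read off properties (1) and (2) directly from Handel's construction. The initial containment $\overline{\rs(\mathcal C_0)}\subset S^1\times (0,1)$ and the nesting $\overline{\rs(\mathcal C_{n+1})}\subset \rs(\mathcal C_n)$ are inherited from the nesting $A_{n+1}\subset \Int(A_n)$. For (2), the $f$-invariance of $P$ together with the equalities $P=\bigcap \rs(\mathcal C_n)=\bigcap \overline{\rs(\mathcal C_n)}$ follows from the fact that $f$ permutes the links of each $\mathcal C_n$ in a cyclic rotation-like fashion and from $\mesh(\mathcal C_n)\to 0$. To see that $P$ actually is a pseudo-circle, I would combine three facts from \cite{Handel}: the chains $\mathcal C_{n+1}$ can be arranged to be crooked inside $\mathcal C_n$, so that $P$ is hereditarily indecomposable; each $\mathcal C_n$ is a circular chain covering $P$, so $P$ is circle-like; and the chains separate the two boundary circles of $\Bbb A$, so $P$ separates the plane. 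These three properties characterize the pseudo-circle via Remark~\ref{pseudo_circle_characterization} (and also align with Definition~\ref{defi_pseudo_circle}). Minimality of $(P,f)$ is then the central conclusion of Handel's paper.

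The main obstacle, which I would outsource to \cite{Handel} rather than reprove here, is the simultaneous control of the dynamics and the geometry: the approximating homeomorphisms $f_n$ must permute the links of $\mathcal C_n$ in an irrational-rotation pattern so that minimality persists in the limit, and the refinement from $\mathcal C_n$ to $\mathcal C_{n+1}$ must be sufficiently crooked, while the modifications carrying $f_n$ to $f_{n+1}$ must be summably small to yield a uniform limit $f$. Verifying these simultaneous constraints is the technical heart of \cite{Handel}; once the circular chains are recognized inside Handel's annular neighborhoods as above, the statement of the lemma follows directly.
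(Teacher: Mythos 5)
Your proposal matches the paper's treatment: the paper likewise gives no independent proof, but identifies Handel's nested annuli with $S^1\times[0,1]$, forms the links $(I^{(n)}_k\cup I^{(n)}_{k+1})\times(0,1)$ of the circular chains exactly as you do, and defers the dynamical and crookedness estimates to \cite{Handel}. This is essentially the same approach, so no further comparison is needed.
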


\subsection{Construction of the Semi-conjugacy.}\label{semi}
In this subsection, we construct the semi-conjugacy. The main tool from 
this subsection is Lemma~\ref{semiconjugacy}.
\begin{lemma}\label{Ap} 
For the homeomorphism $f:\Bbb A\to \Bbb A$
constructed in Lemma~\ref{Handel}, 
there exists a sequence of pseudo-arcs 
$A_0\supset A_1 \supset \cdots$ contained in $P$,
such that, 
\begin{equation}
\lim_{n\to \infty} \text{diam}\big( A_n\big) =0.
\end{equation}
Moreover, for all $n\geq 0$,
each $A_{n+1}$ is contained in an internal 
composant of $A_n$.
In particular, $\bigcap_{n\geq 0} A_n$ is a singleton, 
containing a unique point, called $p_\ast$. 
\end{lemma}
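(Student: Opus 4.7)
My plan is to construct the pseudo-arcs $A_n$ inductively, at each step extracting a small proper subcontinuum through a point of an internal composant. The three standing ingredients are: (i) every non-degenerate subcontinuum of a pseudo-circle (resp.\ pseudo-arc) is itself a pseudo-arc (Remark~\ref{pseudo_arc}); (ii) every indecomposable planar continuum admits an internal composant (Lemma~\ref{Internal_Composant}); and (iii) the classical boundary-bumping principle, which ensures that for any continuum $K$, any point $p\in K$, and any $\varepsilon>0$ with $\varepsilon<\tfrac12\diam(K)$, the connected component $L$ of $p$ in $\overline{B(p,\varepsilon)}\cap K$ is a proper subcontinuum of $K$ satisfying $\varepsilon\le\diam(L)\le 2\varepsilon$, hence non-degenerate.

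For the base case I would pick any proper subcontinuum $A_0$ of $P$; by (i) it is a pseudo-arc. For the inductive step, suppose a pseudo-arc $A_n\subset P$ has been constructed. Since pseudo-arcs are hereditarily indecomposable and $A_n$ is planar, Lemma~\ref{Internal_Composant} produces an internal composant $C_n$ of $A_n$; I pick a point $p_n\in C_n$. Next I choose $\varepsilon_n>0$ small enough that both $\varepsilon_n<\tfrac12\diam(A_n)$ and $2\varepsilon_n<1/(n+1)$, and define $A_{n+1}$ to be the component of $p_n$ in $\overline{B(p_n,\varepsilon_n)}\cap A_n$ supplied by (iii). Then $A_{n+1}$ is a non-degenerate proper subcontinuum of $A_n$ containing $p_n$, so by definition of composant $A_{n+1}\subset C(p_n)=C_n$, placing $A_{n+1}$ inside an internal composant of $A_n$ as required; and $A_{n+1}$ is a pseudo-arc by a further application of (i). By construction $\diam(A_{n+1})<1/(n+1)$, so the diameter condition is met.

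Since $\{A_n\}_{n\geq 0}$ is then a decreasing sequence of non-empty compacta in $\Ann$ with diameters tending to zero, the intersection $\bigcap_{n\geq 0}A_n$ is a non-empty compact set of diameter zero, i.e.\ a single point $p_\ast$. I do not foresee a substantive obstacle: the one point of care is that each $A_n$ must remain a hereditarily indecomposable planar continuum so that both (i) and (ii) continue to apply at the next stage, and this is automatic from Remark~\ref{pseudo_arc}; the rest is purely metric. The reason the statement is interesting (rather than trivial) is the \emph{internal} composant requirement, which is what will allow the subsequent construction to interact cleanly with the circle-like chain structure of $P$.
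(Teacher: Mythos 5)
Your proof is correct and follows essentially the same inductive scheme as the paper's: the paper simply asserts at each stage that a suitably small pseudo-arc can be chosen inside an internal composant, whereas you supply the boundary-bumping details that justify the choice. One small point worth flagging: the paper additionally takes $A_0$ to lie in an internal composant of $P$ itself --- not demanded by the statement, but quietly used later (in the proof of Lemma~\ref{nowheredense}, where it is needed that $p_\ast$ sits in an internal composant of $P$) --- so you should make that choice for $A_0$ rather than an arbitrary non-degenerate proper subcontinuum.
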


\begin{proof}
By Lemma~\ref{Internal_Composant}, 
there exists an internal composant in $P$. 
So, 
we can choose $A_0\subset P$ to 
be a pseudo-arc in $P$, with diameter smaller than $2^{-1}$,
which 
is contained in an internal composant of $P$. 
Then, for the same reason, we can choose $A_1$ to 
be a pseudo-arc contained in some internal composant of $A_0$,
of diameter less than $2^{-2}$.
Inductively, 
we can choose a sequence of pseudo-arcs, namely $\{A_n\}_{n\geq 1}$, 
such that, for any $n\geq 0$,
the diameter $\text{diam}(A_n)$
is smaller than $2^{-n}$, 
and each $A_{n+1}$ is contained in 
an internal composant of $A_n$. The proof is completed. 
\end{proof}

Now we will perform a construction inspired by \cite{Cro}. For our proofs it is essential to have both conditions \eqref{Cn-inclusion-1} and \eqref{Cn-inclusion-2} below satisfied and have a pseudoarc in one of the fibres. To do it following a scheme from \cite{Cro} we would start with
a continuum $A\subset P$ that we want to have in a fibre and then we use a decreasing family of its neighborhoods $U_n$ such that $\cap_n U_n=A$.
However, if one checks the details carefully, then it becames evident that in this approach always one of the conditions  \eqref{Cn-inclusion-1} or \eqref{Cn-inclusion-2} will be broken and it is impossible to satisfy them simultanously.
Therefore we need an adjustment in the construction. The idea here is that we do not fix the seqence $U_n$ apriori, but construct it inductively ($U_{n+1}$ can be defined after $h_1,\ldots,h_n$ are known). In the process, we have to ensure that the sequence of connected open set $U_n$ is nested (i.e. $\overline{U_{n+1}}\subset U_n$) and continuum obtained at the end $A=\cap_n U_n$
is within the family of continua that we are interested in (in our case, the pseudo-arc). These are main issues where we have to be especially careful.

\begin{lemma}\label{semiconjugacy}
	Denote by $f, P$ and $p_\ast$ the objects obtained in Lemmas~\ref{Handel} and~\ref{Ap}.
	Then there exist homeomorphisms $h_n$ on $\mathbb{A}$ such that if we denote
	\begin{align}
	\psi_n: =h_n\circ \cdots\circ h_1.\\
	g_n= \psi_n^{-1}\circ f\circ \psi_n.
	\end{align}
	the the following limits exist (in the topology of uniform convergence)
	\begin{align}
	g: =\lim_{n\to\infty} g_n.\\
	\psi\colon= \lim_{n\to \infty}\psi_n
	\end{align}
	and dynamical system $(\mathbb{A},f)$ is a factor of $(\mathbb{A},g)$ by the factor map $\psi$. For every $x\in \mathbb{A}$ not in orbit of $p_\ast$ (i.e. $f^n(x)\neq p_\ast$ for every $n\in \Z$) the set $\psi^{-1}(x)$ is a singleton, $\psi^{-1}(p_\ast)$ is a pseudoarc and additionally there is an increasing sequence $k_\ell$ such that:
	\begin{align}
	h_{\ell+1}^{-1} \big( \overline{ \rs (\mathcal{C}_{k_{\ell+1}}) }\big) & \subset 
	\rs\big( \mathcal{C}_{k_{\ell}} \big).\label{Cn-inclusion-1}\\
	\mesh \big( (h_{\ell}\circ\cdots\circ h_1)^{-1} ( \mathcal{C}_{k_{\ell}} ) \big) & < 2^{-\ell}\label{Cn-inclusion-2}.
	\end{align}
	for every $\ell\geq 1$
\end{lemma}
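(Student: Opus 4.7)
The strategy is to apply the Denjoy--Rees framework of Lemmas~\ref{lem:conv} and~\ref{lem:fibres}, constructing inductively the families $\mathcal{E}_{(n)}$, the homeomorphisms $h_n$, and auxiliary indices $k_n$ so that hypotheses $\mathbf{A_1},\mathbf{A_2},\mathbf{B_1},\mathbf{B_2},\mathbf{B_3}$ hold together with the two extra requirements \eqref{Cn-inclusion-1} and \eqref{Cn-inclusion-2}. Lemma~\ref{lem:conv} will then yield $\psi$ and $g$ with the desired semi-conjugacy and homeomorphism properties, while Lemma~\ref{lem:fibres} will give that $\psi^{-1}(x)$ is a singleton for every $x$ off the $f$-orbit of $p_\ast$, and $\psi^{-1}(p_\ast)=\bigcap_n\psi_n^{-1}(X_n)$, where $X_n$ denotes the distinguished disk of $\mathcal{E}_{(n)}$ containing $p_\ast$.

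In the inductive step, suppose $h_1,\ldots,h_n$, $k_1<\cdots<k_n$, and $\mathcal{E}_{(n)}$ with distinguished disk $X_n\supset A_n$ have been built, where $A_n$ is the $n$-th nested pseudoarc from Lemma~\ref{Ap}. Using the mesh shrinkage in Lemma~\ref{Handel}, choose $k_{n+1}>k_n$ so large that $\mesh\bigl(\psi_n^{-1}(\mathcal{C}_{k_{n+1}})\bigr)<2^{-(n+1)}$ (securing \eqref{Cn-inclusion-2}) and $\overline{\rs(\mathcal{C}_{k_{n+1}})}$ is deeply contained in $\rs(\mathcal{C}_{k_n})$. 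Then pick a closed disk $X_{n+1}\supset A_{n+1}$, contained in a single link of $\mathcal{C}_{k_{n+1}}$, with $\overline{X_{n+1}}\subset\Int(X_n)$, and extend it to an $(n+2)$-iterable family $\mathcal{E}_{(n+1)}$ by a small perturbation (possible because $p_\ast$ has aperiodic $f$-orbit by minimality on $P$). Finally, define $h_{n+1}$ supported in $\mathcal{E}_{(n)}^n$ and commuting with $f$ along edges of $G(\mathcal{E}_{(n)}^n)$, so enforcing $\mathbf{B_2}$: on $X_n$ it pushes $X_{n+1}$ into a thin ``crooked'' region replicating the pattern of $\mathcal{C}_{k_{n+1}}$ inside $\mathcal{C}_{k_n}$, and on the iterates $f^k(X_n)$ it is obtained by conjugating with $f^k$. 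The displacement of $h_{n+1}$ is taken small enough to yield \eqref{Cn-inclusion-1} and to realize the geometric factor $2^{-(n+1)}$ in $\mathbf{B_3}$.

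The main obstacle, flagged in the paragraph preceding the lemma, is that in the original \cite{Cro} scheme one fixes the target fibre continuum $A\subset P$ at the outset; here doing so would break at least one of \eqref{Cn-inclusion-1} or \eqref{Cn-inclusion-2}, as illustrated by Figure~\ref{fig:chains}. We bypass this by choosing $X_{n+1}$ only after $h_1,\ldots,h_n$ and $k_{n+1}$ have been fixed, so the fibre $\psi^{-1}(p_\ast)$ is determined only in the limit. The delicate point is to verify that this fibre is a genuine pseudoarc. By construction each $\psi_{n+1}^{-1}(X_{n+1})$ is a closed disk inside $\psi_n^{-1}(X_n)$ whose embedding is crooked in the sense inherited from $\mathcal{C}_{k_{n+1}}$ inside $\mathcal{C}_{k_n}$, and by \eqref{Cn-inclusion-2} the meshes of the pulled-back chains tend to zero. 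Hence $\psi^{-1}(p_\ast)$ is arc-like and hereditarily indecomposable, so a pseudoarc by the uniqueness recalled in Remark~\ref{pseudo_arc}. Non-degeneracy is ensured by arranging that $h_{n+1}$ stretches a definite minimum diameter across $\psi_n^{-1}(X_n)$, which is possible thanks to the internal-composant structure of $A_n$ provided by Lemma~\ref{Ap}. Verification of $\mathbf{A_1},\mathbf{A_2},\mathbf{B_1},\mathbf{B_2},\mathbf{B_3}$ themselves is then a routine check analogous to \cite{Cro}.
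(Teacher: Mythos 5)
Your proposal follows essentially the same route as the paper's proof: the Denjoy--Rees scheme of Lemmas~\ref{lem:conv} and~\ref{lem:fibres} with singleton families $\E_{(n)}$ of shrinking disks around $p_\ast$, the disks and the maps $h_n$ chosen \emph{dynamically} (after the previous stage is fixed) so that the pulled-back neighbourhoods form nested chain covers of vanishing mesh, each crooked in the previous one and of diameter bounded below, whence the fibre over $p_\ast$ is a non-degenerate pseudo-arc by Bing's characterization. Two small caveats: the paper chooses $k_{\ell+1}$ only \emph{after} $h_{\ell+1}$ is defined (since \eqref{Cn-inclusion-2} involves $h_{\ell+1}^{-1}$, your order of quantifiers --- fixing $k_{n+1}$ from a mesh bound on $\psi_n^{-1}(\mathcal{C}_{k_{n+1}})$ before constructing $h_{n+1}$ --- genuinely needs the displacement estimate you only allude to), and the non-degeneracy of the fibre comes from the freedom to modify $h_{n+1}$ inside the distinguished disk so that $\diam \psi_{n+1}^{-1}(A_r)>\gamma$, not from the internal-composant property, which is used later (Lemmas~\ref{nowheredense} and~\ref{hereditarily_indecomposable}) to show $Q$ is a pseudo-circle.
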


\begin{proof}
	Let $p_\ast\in \cap_n A_n\subset P$ be pseudo-arcs provided by Lemma~\ref{Ap}.
	Recall that in the construction of $f$ in Lemma~\ref{Handel}, 
	we have chosen a sequence of circular chains $\{\mathcal C_n\}_{n\geq 0}$, 
	such that $P=\bigcap_{n\geq 0} \rs(\mathcal C_n)$. 
	Then, define $V_n$ to be an element contained in $\mathcal C_n$ which contains $p_\ast$. 
	We also requite (refining sequence $\mathcal{C}_n$ when necessary), that the elements of $\{f^i(\overline{V_n})\}_{i=-(n+2)}^{n+2}$ are 
	pairwise disjoint and the sequence $V_n$ is nested.

	As $U_0$ we take a neighborhood of the pseudo-arc $A_0$ such that $U_0\subset \rs(\mathcal C_0)$ and $\{f^i(\overline{U_0})\}_{i=-2}^{2}$
	has pairwise disjoint elements. Further sets $U_n$
	will be defined inductively.
	
	%
	%
	
	Regarding Lemmas~\ref{lem:conv} and~\ref{lem:fibres}, we define inductively 
	the following objects.
	\begin{enumerate}
		\item The sequence of families of closed disks, denoted by $\{\mathcal E_{(n)}\}_{n\geq 0}$, 
		satisfying conditions $\mathbf{A_1}$ and $\mathbf{A_2}$ in Section~\ref{Rees_Conditions}.
		\item The sequence of homeomorphisms 
		$\{h_n\}_{n\geq 1}$ on the annulus $\Bbb A$  
		satisfying conditions $\mathbf{B_1}, \mathbf{B_2}, \mathbf{B_3}$ in Section~\ref{Rees_Conditions}.
	\end{enumerate}
	We start by setting $\E_{(0)}=\{\overline{U_0}\}$, 
	and $k_0=0$. 
	It is immediate from definition that 
	$\E_{(0)}$ satisfies conditions of $\mathbf{A_1}$
	(only item (a) of $\mathbf{A_1}$ needs to be checked for $n=0$).
	Note that 
	\begin{equation}
	\E_{(0)}^1=\{f^{-1}(\overline{U_0}), \overline{U_0}, f(\overline{U_0})\}.
	\end{equation}
	Let $U\subset \overline{U}\subset U_0$ be another neighborhood of $A_0$, such that $\overline{U}$ is homeomorphic to a closed disc, and is contained in the union of links of a chain cover of $A_0$ by discs of diameter at most $2^{-1}$.
	In other words, we may treat $U$ as an open set and as a chain cover.
	Take any integers $s_1>0 $ such that $V_{s_1}\subset U$. 
	
	Then we define 
	the homeomorphism
	$h_1\colon \Bbb A\to \Bbb A$, with the following properties. 
	\begin{align}
	h_1 (U)  &  = V_{s_1}. \label{h1UV}\\
	h_1(p_\ast)      &  = p_\ast.\label{h1UV+}\\
	h_1 \text{ restricted to }&  \Bbb A\backslash 
	U_0 \text{ is the identity}.\label{support_ofh1}
	\end{align} 
	Note that 
	the condition $\mathbf{B_1}$ for $n=1$ follows from (\ref{support_ofh1}). 
	Condition 
	$\mathbf{B_2}$ is empty for $n=1$. Denote 
	$\gamma=\diam A_0/2$. Let $k_1>0$ be an integer large enough, so that 
	$$
	\mesh ( h_1^{-1} ( \mathcal{C}_{k_{1}} ) ) < 2^{-1}
	$$
	and put $U_{k_1}=U$.
	Define now 
	\begin{equation}
	\E_{(1)} : = \{\overline{V_{s_1}} \}.
	\end{equation}  
	By definition
	$\E_{(1)}$ is $(s_1+1)$-iterable, and $s_1\geq 1$.
	It also follows  that
	$\E_{(1)}^2$ refines $\E_{(0)}^1$, 
	and $\E_{(1)}$ is compatible with $\E_{(0)}$ for $1$ iterate.
	In other words, the three conditions in $\mathbf{A_1}$  
	are all satisfied for $n=1$. 
	
	Now we will proceed with main construction, which will be repeated inductively.
	Take any $m>k_1+s_1$ sufficiently large so that, if $R\ni p_\ast$ is a link of $\mathcal{C}_m$ then the sets $\{f^i(\overline{R}) : i=-3s_1,\ldots,3s_1\}$
	are pairwise disjoint and $\overline{R}\subset V_{s_1}$. Let $s$ be any integer such that $A_s\subset R$.
	
	Since in the definition of $h_{1}$ we only require \eqref{h1UV} and \eqref{h1UV+}, we can freely modify it on points of $U_{k_1}\supset A_0$, that is we may assume that $\diam h_1^{-1}(A_s)$ is arbitrarily close to $\diam U_{k_1}$, in particular exceeds $\gamma$. 
	
	Note that $A_s\subset P \subset \rs(\mathcal{C}_m)$. Let $m'>m$ be such that
	$$
	f^i(\rs(\mathcal{C}_{m'} ))\subset \rs (\mathcal{C}_{m}) \qquad \text{ for }|i|\leq 1,
	$$
	and
	$$\mesh h_{1}^{-1}(\mathcal{C}_{m'})<2^{-2}.$$
	We also assume that $h_{1}^{-1}(\mathcal{C}_{m'-1})$ defines a chain cover of $h_{1}^{-1}(A_s)$, which is a refinement of $U$, treated as a chain. Recall that $\mathcal{C}_{m'}$ is crooked inside $\mathcal{C}_{m'-1}$ by definition. Let $p_\ast\in W\subset \overline{W}\subset U_{k_1}$ be a neighborhood of $h_1^{-1}(A_s)$ obtained as the union of elements of a $2^{-2}$-chain cover of the pseudoarc $ h_1^{-1}(A_s)$ consisting of elements of $h_{1}^{-1}(\mathcal{C}_{m'})$ . Since links of $\mathcal{C}_{m'}$ are connected, $W$ is connected as well.
	
	If we view $U_{k_1}$ and $W$ as chains,
	then we may assume that $W$ is $1/4$-chain crooked inside $U_{k_1}$ (in particular, it intersects all links that $U_{k_1}$ is union of).
	Let $W'$ be an open neighborhood of $A_s$ such that $\overline{W'}$ is homeomorphic to a closed disc and
	$$
	A_s\subset W'\subset \overline{W'}\subset h_1(W)\subset V_{s_1}
	$$ and
	take any $s_2>m'+k_1+s_1$ such that
	$     \overline{V_{s_2}}  \subset W'$. 
	
	Recall that the elements of $\{f^i(\overline{V_{s_2}}): |i|\leq 2\}$ are pairwise disjoint sets.
	Then
	we can define 
	the homeomorphism $h_2$ 
	satisfying the following properties. 
	\begin{align}
	h_2(W') & =V_{s_2}, \label{vk2}\\
	h_2(p_\ast) & =p_\ast,\label{past2}\\
	h_2 \text{ commutes with } f & \text{ on } \bigcup_{|i|\leq 1} f^{i}(\overline{h_1(W)}),\label{commute21}\\
	h_2 \text{ restricted to }&  \Bbb A\backslash 
	\bigcup_{i=-1}^1 f^i(h_1(W)) \text{ is the identity}.\label{support_ofh21}
	\end{align}
	For \eqref{commute21}, 
	note that the sets 
	$f^{-1}( V_{s_1} ),  V_{s_1}$,
	and $f(V_{s_1})$
	are pairwise disjoint, so first we 
	define the restriction of $h_2$ to $\overline{W'}$ and then extend it to $V_{s_1}$ so that $h_2|_{(V_{s_1}\setminus h_1(W))}$ is the identity. Next put
	$f^i\circ h_2\big |_{V_{s_1}}\circ f^{-i}$ 
	for $|i|\leq 1$,  and finally put
	$h_{2}(x)=x$ for all other points $x\in \Bbb A$. 
	This way a homeomorphism $h_2$ on $\Bbb A$ is defined, 
	which satisfies 
	(\ref{commute21}) and (\ref{support_ofh21}).  
	
	Now take $k_2$ sufficiently large, so that
	\begin{align}
	h_{2}^{-1} \big( \overline{ \rs (\mathcal{C}_{k_{2}}) }\big) & \subset 
	\rs\big( \mathcal{C}_{k_{1}} \big),\label{3.25}\\
	\mesh (\big( h_{2}\circ h_1)^{-1} ( \mathcal{C}_{k_{2}} ) \big) & < 2^{-2},\label{3.26}
	\end{align}
	and put $U_{k_2}=h_1^{-1}(W')$. 
	
	As long as \eqref{3.26} is fairly easy to satisfy, \eqref{3.25} is possible since
	$$
	\{x: h_2(x)\neq x\}\subset \cup_{|i|\leq 1} f^i(h_1(W))\subset \cup_{|i|\leq 1} f^i(\rs(\mathcal{C}_{m'})) \subset \rs(\mathcal{C}_m) \subset \rs (\mathcal{C}_{k_1}),
	$$
	in particular $h_2^{-1}(P)\subset \rs (\mathcal{C}_{k_1})$.
	Note that $U_{k_2}$ is connected, so we also may view it as a chain with links induced by $W$.
	To finish the construction, put
	\begin{equation}
	\E_{(2)} : = \{\overline{V_{s_2}}\}.
	\end{equation}

	Directly from the definition of $h_2$,
	conditions $\mathbf{B_1}$ and $\mathbf{B_2}$ are satisfied for $n=2$.
	Sets $f^{i}(V_{s_2})$ are pairwise disjoint for $|i|\leq s_2$ and $s_2\geq 4$, so we obtain that 
	$\E_{(2)}$ is at least $4$-iterable. Since $V_{s_2}$ is a neighborhood of $p_\ast$, it is easy to satisfy the requirement that $\E_{(2)}^3$ refines $\E_{(1)}^2$, 
	and clearly also $\E_{(2)}$ is compatible with $\E_{(1)}$ for $2$ iterates.
	Summing up, the three conditions in $\mathbf{A_1}$ are easily satisfied for $n=2$.
	
	Assume now that for $i=1,2,\ldots,n$ we have already defined increasing sequences $s_i,k_i$, open sets $U_{k_i}$ and maps $h_i\colon \Bbb A\to\Bbb A$
	such that $h_i(p_\ast)=p_\ast$ and:
	\begin{enumerate}
		\item $U_{k_{i}}\subset \overline{U_{k_{i}}}\subset U_{k_{i-1}}$, $h_1\circ \ldots \circ h_i(U_{k_i})=V_{s_i}$
		and for some $s>s_n$ we have $A_s\subset V_{s_n}$ and $\diam (h_1\circ \ldots \circ h_n)^{-1}(A_s)>\gamma$.
		\item $\E_{(n)}=\{\overline{V_{s_n}}\}$ and conditions $\mathbf{A_1}$, $\mathbf{B_1}$, $\mathbf{B_2}$  and $\mathbf{B_3}$ are satisfied for each $i\leq n$.
		\item Each $\overline{U_{k_i}}$ is homeomorphic to a closed disc, we can present each set $U_{k_i}$ as a union of elements of a $1/2^i$-chain and under this representation $U_{k_{i}}$ is crooked in $U_{k_{i-1}}.$
		\item The following conditions are satisfied:
		\begin{align}
		h_{i-1}^{-1} \big( \overline{ \rs (\mathcal{C}_{k_{i}}) }\big) & \subset 
		\rs\big( \mathcal{C}_{k_{i-1}} \big). \\
		\mesh \big( (h_i\circ\cdots\circ h_1)^{-1} ( \mathcal{C}_{k_{i}} ) \big) & < 2^{-i}.
		\end{align}
		
	\end{enumerate}
	
	Take any $m>k_n+s_n$ sufficiently large so that, if $R\ni p_\ast$ is a link of $\mathcal{C}_m$ then sets $\{f^i(\overline{R}) : i=-3s_n,\ldots,3s_n\}$
	are pairwise disjoint and $\overline{R}\subset V_{s_n}$. Let $r$ be any integer such that $A_r\subset R$. The only requirements on $h_n$
	are that $h_n(p_\ast)=p_\ast$ and $h_n(h_{n-1}\circ\ldots\circ h_1(U_{k_n}))=V_{s_n}$. Furthermore $A_r\subset A_{s}\subset V_{s_n}$ and $\diam (h_n\circ\cdots\circ h_1)^{-1} (A_s)>\gamma$. Therefore, we may adjust definition of $h_n$ on the set $h_{n-1}\circ\ldots\circ h_{1}(U_{k_n})$
	in such a way that also $\diam (h_n\circ\cdots\circ h_1)^{-1} (A_r)>\gamma$ and all other assumed properties hold (in particular, we have to adjust it also on some images of $U_{k_n}$
	so that  commutativity in $\mathbf{B}_2$ remains satisfied).
	
	
	Note that $A_r\subset P \subset \rs(\mathcal{C}_m)$. Let $m'>m$ be such that
	$$
	f^i(\rs(\mathcal{C}_{m'} ))\subset \rs (\mathcal{C}_{m}) \qquad \text{ for }|i|\leq n,
	$$
and 
$$	\mesh \big( (h_n\circ\cdots\circ h_1)^{-1} ( \mathcal{C}_{m'} ) \big) < 2^{-n-1}.$$	
We also assume that $(h_n\circ\cdots \circ h_{1})^{-1}(\mathcal{C}_{m'-1})$ defines a chain cover of $(h_n\circ\cdots \circ h_{1})^{-1}(A_r)$, which is a refinement of $U_{k_n}$, treated as a chain. 	Let $p_\ast\in W\subset \overline{W}\subset U_{k_n}$ be a neighborhood of $(h_n\circ\cdots\circ h_1)^{-1} (A_r)$ obtained as the union of elements of a $2^{-n-1}$-chain cover of the pseudoarc $(h_n\circ\cdots\circ h_1)^{-1} (A_r)$ consisting of elements of $(h_n\circ\cdots\circ h_1)^{-1} (\mathcal{C}_{m'})$ . Since links of $\mathcal{C}_{m'}$ are connected, $W$ is connected as well.
	
	If we view $U_{k_n}$ and $W$ as chains,
	then we may assume that $W$ is $2^{-n-1}$-chain crooked inside $U_{k_n}$ (in particular intersects all links that $U_{k_n}$ is union of).
	Let $W'$ be an open neighborhood of $A_r$ such that $\overline{W'}$ is homeomorphic to a closed disc and
	$$
	A_r\subset W'\subset \overline{W'}\subset h_n\circ \ldots\circ h_1(W)\subset V_{s_n}
	$$ and
	take any $s_{n+1}>m'+k_n+s_n$ such that $\overline{V_{s_{n+1}}}  \subset W'$. 
	
	Recall that the elements of $\{f^i(\overline{V_{s_{n+1}}}): |i|\leq n+1\}$ are pairwise disjoint sets.
	Then
	we can define 
	the homeomorphism $h_{n+1}$ 
	satisfying the following properties. 
	\begin{align}
	h_{n+1}(W') & =V_{s_n}, \label{vkn1}\\
	h_{n+1}(p_\ast) & =p_\ast,\label{pastn}\\
	h_{n+1} \text{ commutes with } f & \text{ on } \bigcup_{|i|\leq n} f^{i}(\overline{h_n(W)}),\label{commute2}\\
	h_{n+1} \text{ restricted to }&  \Bbb A\backslash 
	\bigcup_{i=-n}^n f^i(h_n(W)) \text{ is the identity}.\label{support_ofh2}
	\end{align}
	For \eqref{commute2}, 
	note that the sets 
	$f^{i}( V_{s_n} )$, $|i|\leq n$
	are pairwise disjoint, so first we 
	define the restriction of $h_{n+1}$ to $h_n(W)\subset V_{s_n}$, extend it by identity to $V_{s_n}$, put
	$f^i\circ h_{n+1}\big |_{V_{s_n}}\circ f^{-i}$ 
	for $i=-n,\ldots,n$,  and finally put
	$h_{n+1}(x)=x$ for all other points $x\in \Bbb A$. 
	In this way a homeomorphism $h_{n+1}$ on $\Bbb A$, 
	which satisfies 
	(\ref{commute2}) is defined.  
	
	Now take $k_{n+1}$ sufficiently large, so that
	\begin{align}
	h_{n+1}^{-1} \big( \overline{ \rs (\mathcal{C}_{k_{n+1}}) }\big) & \subset 
	\rs\big( \mathcal{C}_{k_{n}} \big), \label{3.25-n}\\
	\mesh (\big( h_{n+1}\circ\ldots \circ h_1)^{-1} ( \mathcal{C}_{k_{n+1}} ) \big) & < 2^{-n-1}\label{3.26-n}
	\end{align}
	and put $U_{k_{n+1}}=(h_{n+1}\circ\ldots \circ h_1)^{-1}(W')$. Note that $U_{k_{n+1}}$ contains $(h_n\circ\cdots\circ h_1)^{-1} (A_r)$,
	in particular $\diam(U_{k_{n+1}})>\gamma$. 
	
	As before \eqref{3.26-n} is easy to satisfy and \eqref{3.25-n} is possible, because
	\begin{eqnarray*}
		\{x: h_{n+1}(x)\neq x\}&\subset& \cup_{|i|\leq n} f^i(h_n\circ \ldots\circ h_1(W))\\
		&\subset& \cup_{|i|\leq n} f^i(\rs(\mathcal{C}_{m'})) \subset \rs(\mathcal{C}_m) \subset \rs (\mathcal{C}_{k_n}),
	\end{eqnarray*}
	in particular $h_{n+1}^{-1}(P)\subset \rs (\mathcal{C}_{k_n})$.
	Note that $U_{k_{n+1}}$ is connected, so we also may view it as a chain with links induced by $W$.
	Additionally, modifying $h_{n+1}$ accordingly on $U_{k_{n+1}}$, we may assume that there is $\hat{s}>0$ such that $A_{\hat{s}}\subset V_{s_{n+1}}$
	such that $\diam(h_{n+1}\circ\cdots\circ h_1)^{-1} (A_{\hat{s}})>\gamma$.
	
	To finish the construction, put
	\begin{equation}
	\E_{(n+1)} : = \{\overline{V_{s_{n+1}}}\}.
	\end{equation}

	Directly from the definition of $h_{n+1}$,
	conditions $\mathbf{B_1}$ and $\mathbf{B_2}$ are satisfied for $n+1$.
	Sets $f^{i}(V_{s_{n+1}})$ are pairwise disjoint for $|i|\leq s_{n+1}$ and $s_{n+1}\geq 2^{n+1}$, we obtain that 
	$\E_{(n+1)}$ is at least $2^{n+1}$-iterable. Since $V_{s_{n+1}}$ is a neighborhood of $p_\ast$, it is easy to satisfy the requirement that $\E_{(n+1)}^{n+2}$ refines $\E_{(n)}^{n+1}$, 
	and clearly also $\E_{(n+1)}$ is compatible with $\E_{(n)}$ for $n+1$ iterates.
	Summing up, the three conditions in $\mathbf{A_1}$ are easily satisfied for $n+1$.
	This completes the induction.
	
	Since family of sets $V_n$ intersects in a single point, condition $\mathbf{A_2}$ is satisfied, proving all the properties claimed for the construction.

	Define for all $n\geq 1$,
	\begin{align}
	\psi_n: =h_n\circ \cdots\circ h_1.\\
	g_n= \psi_n^{-1}\circ f\circ \psi_n.
	\end{align}
	Then, we apply Lemma~\ref{lem:conv}, 
	to obtain a homeomorphism $g$ and a semi-conjugacy $\psi$, as  the limits
	\begin{align}
	g: =\lim_{n\to\infty} g_n.\\
	\psi\colon= \lim_{n\to \infty}\psi_n.\label{psin_psi}
	\end{align}
	Observe that the set $A=\cap_nU_{k_n}$ is continuum, as intersection of decreasing family of continua, all of diameter at least $\gamma$. As we announced in the construction, all sets $U_{k_n}$ can be viewed as $2^{-n}$-chains, so $A$ is chainable, 
	and each $U_{k_{n+1}}$ is crooked inside $U_{k_n}$, so $A$ is hereditarily indecomposable, therefore a pseudoarc (see \cite{Bing}).
	
	Now we apply Lemma~\ref{lem:fibres},
	obtaining that
	\begin{align}
	\psi^{-1}(y) \text{ is a singleton for any } y\notin \overline{\{f^n(p_\ast)\}_{n\in \Bbb Z}}. \label{singleteon}\\
	\psi^{-1}(p_\ast)=\bigcap_{n\geq 0}\psi_n^{-1}(\overline{V_{s_n}})=\bigcap_{n\geq 0} U_{k_n}=A. \label{Ais_collapsed}
	\end{align}
	The proof of Lemma~\ref{semiconjugacy} is completed. 
\end{proof}

\subsection{Properties of ``blow up'' of $P$.}\label{Q_is_pseudo}

In this subsection, we prove the following. 
\begin{theorem}\label{Qis_pseudocircle}
Let $\psi$ denote the semi-conjugacy map $\Bbb A$ obtained by construction in Lemma~\ref{semiconjugacy}, 
and denote $Q=\psi^{-1}(P)$.
Then $Q$ is a pseudo-circle.
\end{theorem}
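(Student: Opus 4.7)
The plan is to verify that $Q=\psi^{-1}(P)$ satisfies Definition~\ref{defi_pseudo_circle}: a circle-like, plane-separating continuum equal to the intersection of a strictly decreasing family of circular chains of mesh tending to zero, with consecutive chains in a crooked relation. The candidate chains are $\mathcal{D}_n:=\psi_n^{-1}(\mathcal{C}_{k_n})$. Since $\psi_n$ is a homeomorphism, $\mathcal{D}_n$ is automatically a circular chain; $\mesh(\mathcal{D}_n)<2^{-n}$ by \eqref{Cn-inclusion-2}, and pulling back \eqref{Cn-inclusion-1} by $\psi_n^{-1}$ yields the strict nesting $\overline{\rs(\mathcal{D}_{n+1})}\subset\rs(\mathcal{D}_n)$. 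By passing to a subsequence of Handel's chains, one may assume $\mathcal{C}_{k_{n+1}}$ is crooked inside $\mathcal{C}_{k_n}$. That $Q$ is a continuum is then immediate: it is compact, and Lemma~\ref{lem:fibres} says $\psi|_Q\colon Q\to P$ has connected fibers (singletons or pseudo-arcs $g^k(A)=\psi^{-1}(f^k(p_\ast))$), so $Q$ is connected over the connected space $P$.

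To prove $Q=\bigcap_n\rs(\mathcal{D}_n)$, first iterate \eqref{Cn-inclusion-1}: for $m>n$ one obtains $(h_{n+1}\circ\cdots\circ h_m)^{-1}\bigl(\overline{\rs(\mathcal{C}_{k_m})}\bigr)\subset\rs(\mathcal{C}_{k_n})$ and hence $\psi_m^{-1}(P)\subset\rs(\mathcal{D}_n)$. For $x\in Q$ with singleton fiber, the points $\psi_m^{-1}(\psi(x))\in\rs(\mathcal{D}_n)$ converge to $x$ (any accumulation point $y$ satisfies $\psi(y)=\psi(x)$ and hence equals $x$), so $x\in\overline{\rs(\mathcal{D}_n)}$. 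For $x\in g^k(A)$, Lemma~\ref{lem:fibres} gives $\psi_m(x)\in X_m=f^k(\overline{V_{s_m}})$ for all $m\geq|k|$; one may arrange in Lemma~\ref{semiconjugacy} that each $f^i(\overline{V_{s_m}})$ with $|i|\leq m$ is contained in a single link of $\mathcal{C}_{k_m}$, forcing $x\in\rs(\mathcal{D}_m)$, and then $x\in\rs(\mathcal{D}_n)$ for every $n\leq m$ by monotonicity of $\rs(\mathcal{D}_\cdot)$. Combining both cases with the strict nesting $\overline{\rs(\mathcal{D}_{n+1})}\subset\rs(\mathcal{D}_n)$ gives $Q\subset\rs(\mathcal{D}_n)$ for every $n$. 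Conversely, if $x\in\bigcap_n\overline{\rs(\mathcal{D}_n)}$, then $\psi_n(x)\in\overline{\rs(\mathcal{C}_{k_n})}$; fixing $N$ and letting $n\to\infty$ with $\psi_n(x)\in\overline{\rs(\mathcal{C}_{k_N})}$ yields $\psi(x)\in\overline{\rs(\mathcal{C}_{k_N})}$, so $\psi(x)\in\bigcap_N\overline{\rs(\mathcal{C}_{k_N})}=P$ and $x\in Q$.

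For crookedness of $\mathcal{D}_{n+1}$ inside $\mathcal{D}_n$, I reduce via the homeomorphism $\psi_n^{-1}$ to $h_{n+1}^{-1}(\mathcal{C}_{k_{n+1}})$ being crooked in $\mathcal{C}_{k_n}$. Outside the support of $h_{n+1}$, a union of finitely many $f$-iterates of the small set $V_{s_n}$ each sitting inside a single link of $\mathcal{C}_{k_n}$, one has $h_{n+1}^{-1}(\mathcal{C}_{k_{n+1}})=\mathcal{C}_{k_{n+1}}$ and the assumed crookedness applies. Inside each such piece, the deformed sub-chain is (up to $f$-conjugation on the iterates) the $h_{n+1}^{-1}$-image of the fine chain in $V_{s_n}$; the $2^{-n-1}$-chain crookedness of $W$ inside $U_{k_n}$ built into the proof of Lemma~\ref{semiconjugacy} is precisely designed to guarantee crookedness of this deformed piece inside its ambient link of $\mathcal{C}_{k_n}$.

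Finally, $Q\subset\mathbb{A}$ is planar, and $Q$ plane-separates because the $h_n$ are supported in the open annulus $S^1\times(0,1)$, so $\psi|_{\partial\mathbb{A}}=\id$, and because every fiber of $\psi$ over $\mathbb{A}\setminus P$ is a singleton. Hence $\mathbb{A}\setminus Q=\psi^{-1}(\mathbb{A}\setminus P)$ has exactly two connected components, one adjacent to each boundary circle. By Definition~\ref{defi_pseudo_circle}, $Q$ is the pseudo-circle. The main obstacle is the crookedness step: \emph{a priori} the blow-up maps $h_n$ could destroy the combinatorial crookedness of the pulled-back chains, which is precisely why the construction in Lemma~\ref{semiconjugacy} enforces the inner crookedness of $W$ inside $U_{k_n}$ at each stage.
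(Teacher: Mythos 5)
Your overall strategy differs from the paper's: you try to verify Definition~\ref{defi_pseudo_circle} directly by exhibiting a nested sequence of circular chains $\mathcal{D}_n=\psi_n^{-1}(\mathcal{C}_{k_n})$ with $\mathcal{D}_{n+1}$ crooked inside $\mathcal{D}_n$, whereas the paper never attempts to establish crookedness of these chains. Instead it proves hereditary indecomposability of $Q$ by a purely topological argument (Lemmas~\ref{nowheredense}--\ref{hereditarily_indecomposable}, resting on Krasinkiewicz's internal composants and the fixed-point theorem for composants, Lemma~\ref{lemma_of_composant}), proves circle-likeness and plane separation separately (Lemma~\ref{circle_like}), and then invokes the characterization of Remark~\ref{pseudo_circle_characterization} (circle-like $+$ hereditarily indecomposable $+$ plane separating $\Rightarrow$ pseudo-circle). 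The nesting and mesh estimates \eqref{Cn-inclusion-1}--\eqref{Cn-inclusion-2} that you use are indeed what the paper uses for circle-likeness, and your identification $Q=\bigcap_n\rs(\mathcal{D}_n)$ is in the same spirit as Lemma~\ref{circle_like} (the paper argues the reverse inclusion by a separation argument rather than your case analysis, but both routes are workable).

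The genuine gap is the crookedness step, which you yourself identify as the main obstacle but then dispose of too quickly. The sub-chain of $\mathcal{D}_{n+1}$ lying in the blown-up region is \emph{not} contained in a single ambient link of $\mathcal{D}_n$: by construction $U_{k_n}=\psi_n^{-1}(V_{s_n})$ has diameter exceeding the fixed constant $\gamma$, while $\mesh(\mathcal{D}_n)<2^{-n}$, so the deformed piece stretches across a large number of links of $\mathcal{D}_n$. One must therefore verify the combinatorial crookedness pattern of a long sub-chain relative to many links of $\mathcal{D}_n$, and moreover check the crookedness condition for pairs of links with one index inside the perturbed region and one outside; neither follows from crookedness of $\mathcal{C}_{k_{n+1}}$ in $\mathcal{C}_{k_n}$ away from the support of $h_{n+1}$. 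The ``$2^{-n-1}$-chain crookedness of $W$ inside $U_{k_n}$'' that you appeal to concerns the \emph{linear} chains used to realize the fiber $A=\bigcap_n U_{k_n}$ as a pseudo-arc; the links of $U_{k_n}$ viewed as such a chain are unrelated to the links of $\mathcal{D}_n$ restricted to $U_{k_n}$, so it does not deliver crookedness of $\mathcal{D}_{n+1}$ in $\mathcal{D}_n$. This is precisely the danger the paper flags with Figure~\ref{fig:chains}: a fiber that is small in $\Bbb A$ can be large in the blown-up continuum and destroy the chain structure. To close the gap you would either need to carry out the (delicate, and nowhere written) combinatorial verification of crookedness through the blow-up, or follow the paper's route and derive hereditary indecomposability from the internal-composant analysis, after which only circle-likeness and plane separation remain to be checked.
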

Before we can prove the above result, we again need some preparation.

Recall that by Lemma~\ref{semiconjugacy} all preimages 
$\psi^{-1}(x)$ for $x\in \Bbb A$
are connected, 
so $\psi$ is a monotone map. 
It follows immediately 
that $Q$ is a continuum.
\begin{lemma}\label{nowheredense} 
The pseudo-arc $A=\psi^{-1}(p_\ast)\subset Q$
is nowhere dense in $Q$.
\end{lemma}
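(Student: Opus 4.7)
My plan is to show that for every $a\in A$ and every open neighborhood $\tilde U\subset \mathbb{A}$ of $a$, the intersection $\tilde U\cap(Q\setminus A)$ is nonempty, which will prove that $A$ has empty interior in $Q$. The strategy is to first produce points of $Q\setminus A$ somewhere in $U_{k_n}$ using upper semicontinuity, and then to localize them to within distance $2^{-n}$ of $a$ using the finer chain structure supplied by~\eqref{Cn-inclusion-2}.

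For the first step, I will use that $\psi$ is a continuous surjection between compact metric spaces, hence a closed quotient map. Applying Lemma~\ref{usc}(1) to $U_{k_n}$, which is an open neighborhood of the full fiber $A=\psi^{-1}(p_\ast)$, yields that $\psi^{-1}(\psi(U_{k_n}))$ is open, so $\psi(U_{k_n})$ is an open neighborhood of $p_\ast$ in $\mathbb{A}$. Since $p_\ast$ is not isolated in the nondegenerate continuum $P$, the intersection $\psi(U_{k_n})\cap P$ is an uncountable relatively open subset of $P$; removing the countable orbit $\{f^k(p_\ast)\}_{k\in\Z}$ still leaves uncountably many $y$ for which $\psi^{-1}(y)$ is a singleton $\{q_y\}$ by Lemma~\ref{semiconjugacy}. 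Since $y\in\psi(U_{k_n})$, this singleton must lie in $U_{k_n}\cap(Q\setminus A)$.

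For the second step, I will invoke the chain $\psi_n^{-1}(\mathcal{C}_{k_n})$, whose mesh is below $2^{-n}$ by~\eqref{Cn-inclusion-2}. For $n$ large, $\psi_n(a)\in\rs(\mathcal{C}_{k_n})$ (because $\psi_n(a)\to p_\ast\in P\subset\rs(\mathcal{C}_{k_n})$), so $a$ lies in a link $\tilde L=\psi_n^{-1}(R)\subset\tilde U$, where $R$ is a link of $\mathcal{C}_{k_n}$ meeting $P$. Writing $\psi=\phi_n\circ\psi_n$ with $\phi_n=\lim_{m\to\infty}h_m\circ\cdots\circ h_{n+1}$, the goal reduces to finding $y\in R\cap P$ outside the orbit of $p_\ast$ satisfying $\phi_n(y)=y$; for then $\psi^{-1}(y)=\psi_n^{-1}(y)\in\tilde L$, and the singleton $q_y=\psi_n^{-1}(y)$ is the desired point of $\tilde L\cap(Q\setminus A)$.

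The main obstacle will be arranging the existence of such $y$. By the construction in Lemma~\ref{semiconjugacy}, the restriction of the support of $h_{n+1}$ to $V_{s_n}$ is $\psi_n(W)$, and the supports of subsequent $h_m$ on $V_{s_n}$ form a nested sequence all contained in $\psi_n(W)$; hence every $y\in V_{s_n}\setminus\psi_n(W)$ is fixed by $\phi_n$. I therefore need to verify that $(R\cap P)\setminus\psi_n(W)\neq\emptyset$. The key observation is that $\psi_n(W)$ is a chain-like neighborhood of the \emph{proper} sub-pseudo-arc $A_r\subsetneq P$ built from links of the much finer chain $\mathcal{C}_{m'}$ with $m'>k_n$, so its transversal width is small compared to the diameter of a link of $\mathcal{C}_{k_n}$; by suitable (and available) refinements of $k_n$ and $m'$ in the construction, a link $R$ near $\psi_n(a)$ will extend beyond the thin neighborhood $\psi_n(W)$ and meet $P$ in its complement, yielding the required $y$.
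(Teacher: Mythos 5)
Your overall strategy (find points of $Q\setminus A$ in every small link $\psi_n^{-1}(R)$ around a point of $A$) is reasonable, but the step you yourself flag as the main obstacle contains a genuine gap, and it is precisely the step carrying all the weight. The claim that ``the supports of subsequent $h_m$ on $V_{s_n}$ form a nested sequence all contained in $\psi_n(W)$'' is false. By construction the support of $h_{m+1}$ is $\bigcup_{|i|\leq m} f^i\bigl(h_m\circ\cdots\circ h_1(W_m)\bigr)$, i.e.\ it consists of \emph{all iterates} $f^i$, $|i|\leq m$, of a small neighborhood of $p_\ast$. The construction only guarantees that these iterates are pairwise disjoint among themselves; nothing prevents $f^i(V_{s_{m}})$ for $i\neq 0$ from re-entering $V_{s_n}$ or the link $R$. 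In fact, since $f|_P$ is minimal, the orbit $\{f^i(p_\ast)\}_{i\in\Z}$ is dense in $P$, so for every $m$ large enough there are indices $1\leq|i|\leq m$ with $f^i(p_\ast)\in R\cap P$, and a whole neighborhood of each such point lies in the support of $h_{m+1}$. Consequently $\bigcup_{m>n}\{x: h_m(x)\neq x\}$ is an open set containing a dense subset of $R\cap P$, and the set of $y\in R\cap P$ with $\phi_n(y)=y$ is at best a closed nowhere dense subset of $R\cap P$ whose nonemptiness (let alone its meeting the complement of the orbit of $p_\ast$) is not established by any thinness argument about $\psi_n(W)$ alone. Your first step, while correct, does not rescue this: it only places points of $Q\setminus A$ somewhere in $U_{k_n}$, and $\diam(U_{k_n})>\gamma$ for all $n$, so no localization near $a$ is obtained. (A possible repair would be to use the uniform estimate $d(\psi,\psi_n)\to 0$ from the proof of Lemma~\ref{lem:conv} to show that the singleton fiber $\psi^{-1}(y)$ over a point $y$ chosen deep inside $R\cap P$ and off the orbit of $p_\ast$ lands in $\psi_n^{-1}$ of the union of $R$ with its adjacent links; but this requires a quantitative comparison between $d(\psi,\psi_n)$ and the geometry of $\mathcal{C}_{k_n}$ that you have not made.)

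For comparison, the paper's proof is entirely different and much shorter: it argues by contradiction. If $A$ had interior in $Q$, one could run an arc $\alpha$ from a point of $A$ into a complementary domain of $Q$, staying off $Q$ except at the initial point. Then $\psi\circ\alpha$ is an arc meeting $P$ only at $p_\ast$ yet reaching the complement of $P$; since $p_\ast$ lies in an internal composant of $P$ (this is exactly what Lemma~\ref{Ap} arranges via Krasinkiewicz's theorem), such a continuum must meet every composant of $P$, a contradiction. So the internal-composant machinery, which your argument does not use at all, is the intended tool for this lemma.
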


\begin{proof}
Suppose to the contrary that 
$A$ contains interior in $Q$. Equivalently, 
there exists an open subset $W\subset S^1\times (0,1)$
such that $W\cap Q \neq \emptyset$ and $W \cap Q=W \cap A$. 
Now we consider some connected component $U$
of $W\backslash Q$, which is open.
We can choose an 
arc $\alpha\colon [0,1] \to W$ 
connecting some point in $U$ to 
$\partial U\cap Q$. More precisely, 
we can define $\alpha: [0,1]\to W$, 
such that,  
$\alpha(0) \in A$ 
and $\alpha\big((0,1] \big) 
\subset U$. In particular, 
$\alpha\big( (0,1] \big) \cap Q= \emptyset$.
Then $\psi \big|_{ \alpha([0,1])}$ is a homeomorphism, and by 
the construction of $\psi$ (see Lemma~\ref{semiconjugacy}),
$\psi \circ \alpha(0)=p_\ast$, 
while $\psi \circ \alpha \big( (0,1] \big) 
\cap P = \emptyset$. However, 
$\psi\circ \alpha: [0,1] \to \psi(W)$ 
is an arc containing both
$p_\ast$ and some point in the complement of $P$. 
Since $p_\ast$ is contained in an internal composant of 
$P$,  
$\psi \circ \alpha \big([0,1] \big)$ 
intersects all composants of $P$. 
In particular it contains many points of $P$, which is 
a contradiction. The proof of the lemma is completed.
\end{proof}

\begin{lemma}\label{setof_injectivity} 
There exists a dense $G_\delta$ 
subset $I \subset Q$,
such that, 
for every point $x\in I$, 
\begin{equation}
\psi^{-1}\circ \psi(x) =\{x\}.
\end{equation}
\end{lemma}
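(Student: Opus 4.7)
The plan is to describe $I$ explicitly as the complement of a countable union of nowhere dense closed sets, and then appeal to Baire category. The key observation is that, by Lemma~\ref{semiconjugacy}, the only points of $\mathbb{A}$ where $\psi$ fails to be injective on its fiber are those in the orbit $\{f^n(p_\ast) : n \in \Z\}$ of $p_\ast$, and the corresponding fibers are $g^n(A)$ where $A=\psi^{-1}(p_\ast)$. Concretely, since $\psi\circ g = f \circ \psi$, we have $\psi^{-1}(f^n(p_\ast)) = g^n(\psi^{-1}(p_\ast)) = g^n(A)$, and $g$ preserves $Q = \psi^{-1}(P)$ since $f(P)=P$. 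So I would set
\begin{equation}
I := Q \setminus \bigcup_{n\in \Z} g^n(A),
\end{equation}
and check that $\psi^{-1}\circ\psi(x)=\{x\}$ for exactly these $x$.

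Next I would verify that $I$ is $G_\delta$: each $g^n(A)$ is a compact (hence closed) subset of $Q$, so the union is $F_\sigma$ and its complement is $G_\delta$. For density, I would use the Baire category theorem on the compact metric space $Q$, which reduces the problem to showing each $g^n(A)$ is nowhere dense in $Q$. By Lemma~\ref{nowheredense} the fiber $A=\psi^{-1}(p_\ast)$ itself is nowhere dense in $Q$, and since $g|_Q \colon Q \to Q$ is a homeomorphism (being the restriction of a homeomorphism of $\mathbb{A}$ that preserves $Q$), each $g^n(A)$ is also nowhere dense in $Q$.

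The only subtle point I see is making sure the translates $g^n(A)$ really are nowhere dense rather than accidentally filling up a non-empty open set of $Q$ under iteration; this is immediate from the fact that $g|_Q$ is a homeomorphism, so nowhere-denseness transports along orbits. I do not expect serious obstacles: the hard work of showing that $A$ is a single nowhere-dense pseudo-arc was already accomplished in Lemmas~\ref{semiconjugacy} and~\ref{nowheredense}, and what remains is a routine Baire category argument together with the identification of the non-singleton fibers of $\psi$.
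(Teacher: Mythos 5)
Your proposal is correct and follows essentially the same route as the paper: define $I$ as the complement in $Q$ of the countable union of the iterates of the fiber $A$, invoke Lemma~\ref{nowheredense} for nowhere density, transport it along the homeomorphism restricted to $Q$, and conclude by Baire category together with the singleton-fiber statement of Lemma~\ref{semiconjugacy}. (Your use of $g^n(A)$ rather than the paper's $f^n(A)$ is in fact the more accurate notation, since it is $g$, not $f$, that preserves $Q=\psi^{-1}(P)$.)
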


\begin{proof}
Note that $f\big |_Q :Q\to Q$ is a homeomorphism, and by Lemma~\ref{nowheredense}, 
$A$ is nowhere dense in $Q$. Thus, $f^n(A)$ is nowhere dense for all $n\in \Bbb Z$.
Consider the set $\bigcup_{n\in \Bbb Z}f^n(A)$, which is a countable union of nowhere 
dense closed sets. Denote the complement by 
$I=  Q\backslash \bigcup_{n\in \Bbb Z}f^n(A)$.
Then by the Baire category theorem, $I$~as an 
intersection of a sequence of 
open and dense sets is a dense $G_\delta$ set.  
Moreover, it follows from (\ref{singleteon}) and (\ref{Ais_collapsed}) 
that, for any $x\in I$, 
\begin{equation}
\psi^{-1}\circ \psi(x) =\{x\}.
\end{equation}
The proof is completed.
\end{proof}

\begin{lemma}\label{indecomposable} 
$Q$ is an indecomposable continuum. 
\end{lemma}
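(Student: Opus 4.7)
The plan is to argue by contradiction. Suppose $Q = Q_1 \cup Q_2$ where $Q_1, Q_2$ are proper subcontinua of $Q$. Push forward by the semi-conjugacy: each $\psi(Q_i)$ is a subcontinuum of $P$ (continuous image of a continuum), and since $\psi(Q) = \psi(\psi^{-1}(P)) = P$ by surjectivity, we have $\psi(Q_1) \cup \psi(Q_2) = P$. The pseudo-circle $P$ is indecomposable, so at least one of these must equal $P$; relabeling if necessary, assume $\psi(Q_1) = P$.

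Next I invoke Lemma~\ref{setof_injectivity}: the set $I = Q \setminus \bigcup_{n \in \Z} f^n(A)$ is a dense $G_\delta$ subset of $Q$ on which $\psi$ is one-to-one, i.e., $\psi^{-1}(\psi(x)) = \{x\}$ for each $x \in I$. I claim $I \subset Q_1$. Indeed, given $x \in I$, the point $y = \psi(x)$ lies in $P = \psi(Q_1)$, so there exists some $z \in Q_1$ with $\psi(z) = y$. But $\psi^{-1}(y) = \{x\}$, forcing $z = x$ and hence $x \in Q_1$.

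Since $Q_1$ is closed in $Q$ and contains the dense set $I$, it follows that $Q_1 = Q$, contradicting the assumption that $Q_1$ was a proper subcontinuum. Therefore $Q$ is indecomposable.

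The argument is essentially direct once Lemmas~\ref{nowheredense} and~\ref{setof_injectivity} are in hand; the only mild subtlety is making sure we may symmetrize the roles of $Q_1$ and $Q_2$ using indecomposability (as opposed to hereditary indecomposability) of $P$, and that the set $I$ of injectivity genuinely passes through any subcontinuum whose image covers $P$. Both points reduce to the purely set-theoretic observation that a singleton fibre lying over $y \in P$ has no choice but to sit inside whichever $Q_i$ maps onto $P$.
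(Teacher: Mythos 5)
Your proof is correct and follows essentially the same route as the paper: decompose, push forward by $\psi$, use indecomposability of $P$ to get $\psi(Q_1)=P$, and then use the dense $G_\delta$ set of injectivity points from Lemma~\ref{setof_injectivity} to force $Q_1=Q$. The paper phrases the final step as picking a single injectivity point in the open set $Q\setminus Q_1$ and deriving an immediate contradiction, whereas you show $I\subset Q_1$ and invoke density plus closedness; these are interchangeable.
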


\begin{proof} 
Suppose to the contrary that $Q$ is decomposable. Then we can write $Q=K\bigcup L$,
where $K$ and $L$ are proper subcontinua of $Q$. It follows that 
$\psi(K)$ and $\psi(L)$ are two subcontinua of $P$ as well as $P=\psi(K)\bigcup \psi(L)$.
Since $P$ is indecomposible, it follows either $\psi(K)$ or $\psi(L)$
is equal to $P$. Suppose without loss of generality that 
$\psi(L)=P$.
Now recall that $K \backslash L= Q\backslash L\neq \emptyset$ 
is open in $Q$. 
So by Lemma~\ref{setof_injectivity}, there exists 
$x\in  \big( K \backslash L\big) \cap I$, where 
$I=\big( Q\backslash \bigcup_{n\in \Bbb Z}f^n(A)\big)$.
In particular,  $\psi^{-1} \circ \psi(x)= \{x\}$. 
On the other hand, by assumption that $\psi(L)=P$,
there always exists $y\in L$ such that $\psi(y)=x$.
This 
provides a contradiction. Thus, $Q$ is indecomposable. 
\end{proof}

\begin{lemma}\label{hereditarily_indecomposable} 
The continuum $Q$ is hereditarily indecomposable. 
\end{lemma}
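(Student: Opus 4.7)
The plan is to exhibit $Q$ as the intersection of a decreasing sequence of circular chain covers of vanishing mesh such that each chain is crooked inside the previous one; by Bing's construction of the pseudo-circle (Definition~\ref{defi_pseudo_circle}) any such continuum is hereditarily indecomposable.

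For each $\ell\geq 1$, set $\mathcal{D}_\ell:=\psi_\ell^{-1}(\mathcal{C}_{k_\ell})$. Since each $\psi_\ell$ is a homeomorphism of $\mathbb{A}$, $\mathcal{D}_\ell$ is a circular chain with the same combinatorial structure as the Handel chain $\mathcal{C}_{k_\ell}$. Condition~\eqref{Cn-inclusion-2} of Lemma~\ref{semiconjugacy} yields $\mesh(\mathcal{D}_\ell)<2^{-\ell}$, while applying $\psi_\ell^{-1}$ to condition~\eqref{Cn-inclusion-1} and using $\psi_{\ell+1}=h_{\ell+1}\circ\psi_\ell$ produces $\overline{\rs(\mathcal{D}_{\ell+1})}\subset \rs(\mathcal{D}_\ell)$. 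Using that $P=\bigcap_\ell\overline{\rs(\mathcal{C}_{k_\ell})}$, that $Q=\psi^{-1}(P)$, and that $\psi_\ell\to\psi$ uniformly, one verifies $Q=\bigcap_\ell\overline{\rs(\mathcal{D}_\ell)}$, so $\{\mathcal{D}_\ell\}_\ell$ is a decreasing family of circular chain covers of $Q$ with vanishing mesh.

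Next, show that $\mathcal{D}_{\ell+1}$ is crooked inside $\mathcal{D}_\ell$. As $\psi_\ell$ is a homeomorphism, it suffices to show $h_{\ell+1}^{-1}(\mathcal{C}_{k_{\ell+1}})$ is crooked inside $\mathcal{C}_{k_\ell}$. Outside the support of $h_{\ell+1}$ -- which is contained in $\rs(\mathcal{C}_{k_\ell})$ by hypothesis $\mathbf{B_1}$ together with the support bound established in the construction of Lemma~\ref{semiconjugacy} -- the chain $h_{\ell+1}^{-1}(\mathcal{C}_{k_{\ell+1}})$ coincides with $\mathcal{C}_{k_{\ell+1}}$, which is crooked inside $\mathcal{C}_{k_\ell}$ directly by Lemma~\ref{Handel} and Definition~\ref{defi_pseudo_circle}. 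Inside the support, the inductive construction of $h_{\ell+1}$ was designed precisely so that the connected open neighbourhood $W$ of the relevant pseudo-arc is $2^{-(\ell+1)}$-chain crooked inside $U_{k_\ell}$; pushing this forward by $h_{\ell+1}^{-1}$ yields crookedness of $h_{\ell+1}^{-1}(\mathcal{C}_{k_{\ell+1}})$ inside $\mathcal{C}_{k_\ell}$ in the support region as well.

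Finally, invoke Bing's classical result encoded in Definition~\ref{defi_pseudo_circle}: a continuum obtained as the intersection of a decreasing family of circular chain covers with vanishing mesh such that each successive chain is crooked in the previous one is hereditarily indecomposable. Hence $Q$ is hereditarily indecomposable. The main obstacle is the crookedness verification in the second step: one must carefully combine the \emph{ambient} crookedness of $\mathcal{C}_{k_{\ell+1}}$ inside $\mathcal{C}_{k_\ell}$ (outside the support of $h_{\ell+1}$) with the \emph{local} crookedness built into $W$ inside $U_{k_\ell}$ (inside the support), and verify that across the boundary between these two regions the resulting chain $h_{\ell+1}^{-1}(\mathcal{C}_{k_{\ell+1}})$ still satisfies the combinatorial crookedness requirement of Definition~\ref{defi_pseudo_circle}, which in practice amounts to a careful bookkeeping of how the links of the two chains meet along $\partial(\mathrm{supp}\, h_{\ell+1})$.
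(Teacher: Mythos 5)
Your route is genuinely different from the paper's, but it has a gap at exactly the point you yourself flag as ``the main obstacle,'' and that gap is not a matter of bookkeeping along $\partial(\supp h_{\ell+1})$ --- it is the central difficulty the entire construction is designed to circumvent. Conditions \eqref{Cn-inclusion-1} and \eqref{Cn-inclusion-2} concern only the realizations $\rs(\cdot)$ and the mesh; they yield a \emph{nested} sequence of circular chains with mesh tending to zero (this is precisely what Lemma~\ref{circle_like} extracts, and all it extracts: circle-likeness). They do not give that each link of $h_{\ell+1}^{-1}(\mathcal{C}_{k_{\ell+1}})$ lies in a single link of $\mathcal{C}_{k_\ell}$ --- which is already condition (i) of the crookedness definition --- let alone the combinatorial pattern (ii). Concretely, $h_{\ell+1}^{-1}$ expands the tiny link $V_{s_{\ell+1}}$ back out to $W'$ inside $V_{s_\ell}$; since $k_\ell$ is chosen large \emph{after} $s_\ell$, the set $V_{s_\ell}$ meets many links of $\mathcal{C}_{k_\ell}$, so a single link of $\mathcal{C}_{k_{\ell+1}}$ near $p_\ast$ can be stretched by $h_{\ell+1}^{-1}$ across several links of $\mathcal{C}_{k_\ell}$. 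Moreover, crookedness is a global condition on pairs of links and all links between them, so it cannot be verified separately inside and outside $\supp h_{\ell+1}$ and then glued; and the crookedness that the construction \emph{does} arrange --- $U_{k_{\ell+1}}$ crooked inside $U_{k_\ell}$ as linear chains --- concerns the neighborhoods of the fibre $A$ (it is used only to show $A$ is a pseudo-arc), not the circular chains $\mathcal{D}_\ell$. This is exactly the phenomenon of Figure~\ref{fig:chains}. To salvage your approach you would have to build the crookedness of $\psi_{\ell+1}^{-1}(\mathcal{C}_{k_{\ell+1}})$ inside $\psi_\ell^{-1}(\mathcal{C}_{k_\ell})$ into the inductive construction as an additional requirement and prove it can be met; as written, the lemma is not proved.

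For comparison, the paper's proof never touches chains. It argues by contradiction from a decomposition $M=K\cup L$ of a subcontinuum of $Q$: injectivity of $\psi$ off $\bigcup_n f^n(A)$ (Lemma~\ref{setof_injectivity}) forces $\psi(M)$ to be a proper subcontinuum of $P$, hence a point or a pseudo-arc; the singleton case is excluded because $M$ would then sit inside a pseudo-arc fibre; in the pseudo-arc case, hereditary indecomposability of $\psi(M)$ forces $K\setminus L$ into a single fibre $f^n(A)$ (using Lemma~\ref{lemma_of_composant} to separate the fibres into distinct composants), and the internal-composant property of the $A_k$ from Lemma~\ref{Ap} then produces an arc that must meet $L$ inside an open set disjoint from $L$ --- a contradiction. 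That ``soft'' argument is what lets the authors avoid ever having to control crookedness of the blown-up chains.
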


\begin{proof}
By comparing definitions, let us 
first point out the fact that,
the composants of $Q$ 
are precisely the preimages of the composants of $P$ under the semi-conjugacy 
$\psi$.

Suppose to the contrary that 
a non-degenerate 
subcontinuum $M\subset Q$ is decomposable. 
By Lemma~\ref{indecomposable} $M$ is proper, hence
$Q\backslash M$ is open in $Q$, and 
by Lemma~\ref{setof_injectivity},
$(Q \backslash M)\cap I\neq \emptyset$, 
where $I=Q\backslash \bigcup_{n\in \Bbb Z}f^n(A)$, 
restricted to which,
 the semi-conjugacy $\psi$
is $1$ to $1$. In particular $\psi(M)\neq P$, so $\psi(M)$ 
is a proper subcontinuum of $P$. 
It follows that, $\psi(M)$ is either a singleton or a pseudo-arc.
It is not hard to see that $\psi(M)$ can not be a singleton, 
because otherwise, by the construction of $\psi$,
$M\subset f^n(A)$ for some $n\in \Bbb Z$. Then, 
as a non-degenerate subcontinuum of the pseudo-arc 
$f^n(A)$, by Remark~\ref{pseudo_arc},
$M$ itself has to be a pseudo-arc, which is indecomposable. 

Now we suppose $\psi(M)$ is a pseudo-arc. 
By assumption, 
there are proper subcontinua $K$ and $L$ of $M$,
 such that $K \cup L = M$. 
 It follows that $\psi(K)$ and $\psi(L)$ are two subcontinua 
 of $\psi(M)$, which in turn is a subcontinuum of $P$. They satisfy 
\begin{equation}\label{psiKLM}
\psi(K) \cup \psi(L)= \psi (M).
\end{equation} 
Since $\psi(M)$ is hereditarily indecomposible, either $\psi(L)$
or $\psi(K)$ is equal to $\psi(M)$. 
Let us assume $\psi(L)=\psi(M)$ for definiteness. Then, 
for any $x\in K\backslash L$, there exists some $y\in L$, 
such that $\psi(x)=\psi(y)$. It follows that $x\notin I$, 
i.e., there exists some $n\in \Bbb Z$ such that $x\in f^n(A)$.

Lemma~\ref{indecomposable} tells us $Q$ is indecomposable. 
Now by Lemma~\ref{lemma_of_composant}, 
$f\big|_Q$ is fixed point free, which implies that no two members of the sequence $\{f^n(A)\}_{n\in \Bbb Z}$  of pseudo-arcs
  belong to the same composant of $Q$.
 Thus, 
$K\backslash L$ is contained in $f^n(A)$ for some $n\in \Bbb Z$. Up to 
composing by $f^{-n}$ if necessary, we simply assume that 
$K\backslash L\subset A$.
Since $K \backslash L=M\backslash L$ 
is open in $M$, there is some nonempty open set $W \subset S^1\times (0,1)$
such that,
\begin{align}
W \cap (K \backslash L) & = W \cap M \neq \emptyset.\\
W \cap L & = \emptyset. \label{capLnull}
\end{align} 
Since $\psi(K \backslash L)=p_\ast$, 
as in the proof of Lemma~\ref{nowheredense}, 
we can find some arc 
$\beta : [0,1] \to W$ such that 
\begin{align}
\beta(0) \in A.\\
\beta \big((0,1] \big) \cap M=\emptyset.
\end{align}
Since $\psi(M)$ is a pseudo-arc in $P$ containing 
the point $p_\ast$, 
for each $i\in \Bbb Z$,
it follows 
 $A_i\cap \psi(M)\neq \emptyset$. Since the sets $A_i$ are pseudo-arcs, it follows 
 either $A_i\subset \psi(M)$ or $\psi(M)\subset A_i$. 
 Since by Corollary~\ref{shrinking_diameter},
 $\lim_{n\to \infty}\text{diam} (A_n)=0$, 
 there is $k>0$ such that $A_k \subset \psi(M)$. 
 Now, by Lemma~\ref{Ap},
 $A_{k+1} \subset A_k$ contains 
 $p_\ast$, and 
 $A_{k+1}$ is in an internal composant of $A_k$. 
 
 Thus, denote $\beta' :=\psi\circ \beta\big( [0,1] \big)$
 which is an arc in $\Bbb A$ containing the point $p_\ast$.
 Since $\beta'$ 
 is in an internal composant of $A_k$,
 it must intersect all the composants of 
 $A_k$. 
 Then there exists $y \in A_k \cap \beta'$, with 
 $y \neq p_\ast$. 
 But $y \in \psi(M)$, 
 and there is some $x \in M$ such that $\psi(x)=y$.
 Then $x \in L$, 
because otherwise $\psi(x)=p_\ast$. 
But then $x\in W\cap L$ which is a contradiction to (\ref{capLnull}).
So $Q$ is hereditarily indecomposable, and the proof is completed. 
\end{proof}

\begin{lemma}\label{circle_like} The continuum $Q$ is circle-like and separates $\Ann$.
\end{lemma}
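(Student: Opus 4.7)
The plan is to show $Q$ is circle-like by using directly the chains $\psi_n^{-1}(\mathcal{C}_{k_n})$ produced in Lemma~\ref{semiconjugacy}. Each $\psi_n$ is a homeomorphism of $\Ann$, so the bijective image $\psi_n^{-1}(\mathcal{C}_{k_n})$ preserves the combinatorial incidence relation $D_i\cap D_j\neq \emptyset \iff \rho(i,j)\leq 1$ and is therefore a circular chain whose links are open sets with closures closed discs. Its mesh is bounded by $2^{-n}$ thanks to \eqref{Cn-inclusion-2}. Circle-likeness thus reduces to the single covering statement
\[
Q\subset \psi_n^{-1}\bigl(\rs(\mathcal{C}_{k_n})\bigr) \qquad \text{for every } n\geq 1.
\]

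To prove this I would exploit the support information recorded during the construction of Lemma~\ref{semiconjugacy}: the computation appearing just after the definition of $k_{n+1}$ establishes $\{x:h_{n+1}(x)\neq x\}\subset \rs(\mathcal{C}_{k_n})$. Because $(k_m)$ is strictly increasing and the chains of Lemma~\ref{Handel} satisfy $\overline{\rs(\mathcal{C}_{\ell+1})}\subset \rs(\mathcal{C}_\ell)$, every $h_m$ with $m>n$ is the identity on $\Ann\setminus \rs(\mathcal{C}_{k_n})$. Therefore $h_m\circ\cdots\circ h_{n+1}$ is the identity on $\Ann\setminus \rs(\mathcal{C}_{k_n})$ for every $m>n$, and passing to the uniform limit (which exists by Lemma~\ref{lem:conv}) yields
\[
\psi\circ \psi_n^{-1}(y)=y \qquad \text{for every } y\in \Ann\setminus \rs(\mathcal{C}_{k_n}).
\]
If some $q\in Q$ had $\psi_n(q)\notin \rs(\mathcal{C}_{k_n})$, then applying this identity with $y=\psi_n(q)$ would give $\psi(q)=\psi_n(q)\notin \rs(\mathcal{C}_{k_n})$, contradicting $\psi(q)\in P\subset \rs(\mathcal{C}_{k_n})$. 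Hence $\psi_n(q)\in \rs(\mathcal{C}_{k_n})$ and the covering property holds. Letting $n\to\infty$ produces circular chain covers of $Q$ with mesh tending to zero, so $Q$ is circle-like.

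For the separation statement, the identity $\psi^{-1}(P)=Q$ gives $\Ann\setminus Q = \psi^{-1}(\Ann\setminus P)$. Since $P$ separates $\Ann$, there is a disjoint decomposition $\Ann\setminus P = U_1\sqcup U_2$ into nonempty open sets, and surjectivity of $\psi$ makes both $\psi^{-1}(U_1)$ and $\psi^{-1}(U_2)$ nonempty. These are disjoint open subsets of $\Ann\setminus Q$ whose union is all of $\Ann\setminus Q$, so $\Ann\setminus Q$ is disconnected and $Q$ separates $\Ann$.

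The delicate point is the covering property. One cannot conclude it by merely invoking the uniform convergence $\psi_n\to\psi$ together with $P\subset \rs(\mathcal{C}_{k_n})$, because the open neighborhoods $\rs(\mathcal{C}_{k_n})$ of $P$ shrink at a rate that is not controlled against $\|\psi_n-\psi\|_\infty$. The crucial feature exploited above is that the careful choice of $k_n$ enforced by \eqref{Cn-inclusion-1} makes the tail $h_m\circ \cdots\circ h_{n+1}$ \emph{exactly} equal to the identity off $\rs(\mathcal{C}_{k_n})$, upgrading what would otherwise be a proximity estimate to an exact equality $\psi=\psi_n$ outside $\rs(\mathcal{C}_{k_n})$ that feeds directly into the contradiction.
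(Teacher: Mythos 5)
Your proof is correct, but the route to the key covering statement is genuinely different from the paper's. The paper proves the stronger two\-/sided identity $Q=\bigcap_{n}\psi_n^{-1}(\rs(\mathcal{D}_n))$: the inclusion $\bigcap\subset Q$ by passing to the limit through the nested closures, and the inclusion $Q\subset\bigcap$ by contradiction --- if the intersection $Q'$ were a proper subcontinuum of $Q$ it would contain $p_\ast$, its image $\psi(Q')$ would be a pseudo-arc or a point and hence non-separating, so an arc joining the two sides of $\Ann\setminus\overline{\rs(\mathcal{D}_0)}$ could avoid $\psi(Q')$, and a limit point of its $\psi$-preimage, lying in every $\psi_n^{-1}\bigl(\overline{\rs(\mathcal{D}_n)}\bigr)$, would contradict the disjointness from $Q'$. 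You instead obtain the single inclusion you need directly from the support localization $\{x: h_{m}(x)\neq x\}\subset\rs(\mathcal{C}_{k_{m-1}})\subset\rs(\mathcal{C}_{k_n})$ for $m>n$, which makes $\psi_m\circ\psi_n^{-1}=h_m\circ\cdots\circ h_{n+1}$ the identity off $\rs(\mathcal{C}_{k_n})$ and hence $\psi\circ\psi_n^{-1}=\mathrm{id}$ there in the limit; this is shorter and bypasses the composant/separation machinery for this step. What it does not give is the reverse inclusion, so it certifies circle-likeness only in the sense of the paper's covering definition --- which is all that the characterization in Remark~\ref{pseudo_circle_characterization} requires --- whereas the paper's equality exhibits $Q$ as the exact intersection of the nested circular chains. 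Your separation argument (pulling back a disconnection of $\Ann\setminus P$ through the continuous surjection $\psi$, using $Q=\psi^{-1}(P)$) is the standard one and is sound; the paper leaves that half of the lemma essentially implicit.
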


\begin{proof}
By the construction, denote by   
$\{\mathcal{C}_k\}_{k\geq 0}$  
for the defining sequence of circular chains for $P$, and let 
$\{h_n\}_{n\geq 0}$ and $\{\psi_n\}_{n\geq 0}$
be the sequence of homeomorphisms 
constructed in Lemma~\ref{semiconjugacy}.
Since the sequence $\{\mesh\big( \mathcal{C}_k \big)\}_{k\geq 0}$ 
converges to $0$, 
we can choose an increasing integer subsequence $\{k_j\}_{j\geq 0}$,
such that $k_0=0$ and for any $j\geq 0$ we have
\begin{align}
h_{j+1}^{-1} \big( \overline{ \rs (\mathcal{C}_{k_{j+1}}) }\big) & \subset 
 \rs\big( \mathcal{C}_{k_{j}} \big). \label{Dn_are_decreasing}\\
\mesh \big( \psi_{j+1}^{-1} ( \mathcal{C}_{k_{j+1}} ) \big) & < 2^{-j}.\label{mesh_is_small}
\end{align}
Now we denote $\mathcal D_j = \mathcal{C}_{k_j}$ for all $j\geq 0$.
For simplicity of the notation we will write $\psi_n$ instead of $\psi_{k_n}$.
Clearly, $\{\psi_n^{-1} (\mathcal D_n)\}_{n\geq 0}$ 
is a sequence of circular chains, and (\ref{Dn_are_decreasing}) show inductively
this sequence is in fact  
decreasing. By (\ref{mesh_is_small}), 
it follows that the continuum  
$\bigcap_{n\geq 0} \rs(\mathcal D_n)$
is circle-like. 
So we are left to show the following. 
\begin{equation}
Q=\bigcap_{n=0}^{\infty} \psi_n^{-1}(\rs ( \mathcal{D}_n)).
\end{equation}

Choose any $x \in\bigcap_{n=0}^{\infty} \psi_n^{-1}(\rs (\mathcal{D}_n))$. 
By the choice, 
$\psi_n (x) \in \rs (\mathcal{D}_n)$ for all  $n\geq 0$. 
Noting (\ref{psin_psi}) and \eqref{cn:subsets}, 
it follows that 
$\psi(x) \in \rs ( \mathcal{D}_n)$ for each $n\geq 0$. Thus, 
\begin{equation}
\psi(x) \in \bigcap_{n=0}^{\infty} \rs \big( \mathcal{D}_n \big)=P. 
\end{equation}
Then $x \in \psi^{-1}\circ \psi (x) \subset \psi^{-1}(P)=Q$ 
and so $\bigcap_{n=0}^{\infty} \psi^{-1}(\rs ( \mathcal{D}_n)) \subseteq Q$.

Next, suppose $Q'=\bigcap_{n=0}^{\infty} \psi_n^{-1}(\rs (\mathcal{D}_n))$ 
is a proper subcontinuum of $Q$.
By definition
$\psi_n(p_\ast)=p_\ast$ for all $n\geq 1$ and therefore
$\psi_n^{-1}(p_\ast)=p_\ast \in  \psi_n^{-1}(\rs ( \mathcal{D}_n))$ 
for each $n\geq 0$, which yields that $p_\ast \in Q'$. 
Then
$Q'$ as a proper subcontinuum of $Q$, 
must be contained in the 
composant $C(p_\ast)$
of $Q$ that contains $p_\ast$. 
Then $\psi(Q')$ is a proper subcontinuum of $P$. 
So, $\psi(Q')$ is either a pseudo-arc or a singleton. In particular, 
$\psi(Q')$ does not separate $\Bbb A$.

On the other hand, we know $\overline{\rs(\mathcal D_0)}$ separates $\Bbb A$. 
More precisely, 
the set $\Bbb A \backslash \overline {\rs ( \mathcal{D}_0)}$ 
has two connected components, 
and we call 
them $S$ and $T$, respectively. Since $\psi(Q')$ is not separating, there exists an arc 
$\beta :[0,1] \to \Bbb A$ such that $\beta(0)\in \psi(S)$ and $\beta(1)\in \psi(T)$, and 
$\beta( [0,1])\bigcap \psi(Q')=\emptyset$. 
Let $L:=\psi^{-1}(\beta([0,1]))$. Then $L$ is a continuum contained in $S^1\times (0,1)$. 
Moreover, 
\begin{equation}\label{AQ'empty}
L \cap Q' = \emptyset.
\end{equation} 
However, 
 $L \bigcap \psi_n^{-1} \big( \overline { \rs ( \mathcal{D}_n)} \big) \neq \emptyset$ for any  $n\geq 0$, since it intersects both $S$ and $T$. 
 Then for any $n\geq 0$, we 
 choose $y_n \in L \bigcap  \psi_n^{-1}  \big( \overline{ \rs ( \mathcal{D}_n)} \big)$. 
 By extracting a subsequence if necessary, 
 we can assume the sequence $\{ y_n\}_{n\geq 0}$ 
 converges to a point $y_\ast \in L$.
 
 We can also deduce from (\ref{Dn_are_decreasing}) 
the following for all $n >k \geq 1$.
\begin{align}
  \psi_n^{-1}( \overline { \rs ( \mathcal{D}_n) })
= &  \psi_{n-1}^{-1}h_n^{-1}\big( \overline {\rs ( \mathcal{D}_n)} \big) 
\\
\subset &
\psi_{n-1}^{-1}( \rs ( \mathcal{D}_{n-1}))\subset \ldots  \nonumber \\
\subset &  \psi_k^{-1}( \rs ( \mathcal{D}_{k})) . \nonumber
\end{align}
Therefore $y_n \in \psi_k^{-1} \big( \overline {\rs \big( \mathcal D_k\big)} \big)$ 
for any $n\geq k\geq 0$. It follows that 
$y\in \psi_k^{-1} \big( \overline {\rs \big( \mathcal D_k\big)} \big)$ for all $n\geq 0$. 
Thus $y\in L\bigcap Q'$, which is a contradiction to (\ref{AQ'empty}). This contradiction 
shows $Q'=Q$. We have completed the proof of the lemma.
\end{proof}

\begin{proof}[End of Proof of Proposition~\ref{Qis_pseudocircle}] 
It follows directly from Lemma~\ref{hereditarily_indecomposable}, Lemma~\ref{circle_like} and the characterization stated 
in
Remark~\ref{pseudo_circle_characterization}, that $Q$ is a pseudo-circle.
\end{proof}

\begin{cor}\label{shrinking_diameter} 
Let $f\colon \Bbb A\to \Bbb A$ be the homeomorphism given in Lemma~\ref{Handel} with semiconjugacy $\psi\colon (\Bbb A,g)\to (\Bbb A,f)$, 
mapping the pseudo-circle $Q$ onto the pseudo-circle $P$
provided by Lemma~\ref{semiconjugacy}. 
Then, for the pseudo-arc $A=\psi^{-1}(p_\ast)$ we have 
\begin{equation}
\lim_{|n|\to \infty} \text{diam} \big(g^n(A) \big)=0.
\end{equation} 
\end{cor}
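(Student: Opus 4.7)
The plan is to recognise $g^n(A)$ as the $\psi$-fiber over the orbit point $f^n(p_\ast)$, trap this fiber inside the $\psi_{|n|-1}^{-1}$-image of one of the ``new'' elements appearing in $\E_{(|n|)}^{|n|+1}\setminus\E_{(|n|)}^{|n|-1}$, and then invoke hypothesis $\mathbf{B_3}$, which is precisely the statement that the diameters of these preimages vanish as $|n|\to\infty$.

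First I would verify the identity $g^n(A)=\psi^{-1}(f^n(p_\ast))$ for every $n\in\Z$. The inclusion $\subset$ is immediate from $\psi\circ g=f\circ\psi$ and $\psi(A)=\{p_\ast\}$; conversely, if $\psi(y)=f^n(p_\ast)$ then $\psi(g^{-n}(y))=p_\ast$, so $g^{-n}(y)\in\psi^{-1}(p_\ast)=A$ and $y\in g^n(A)$. Then I would apply Lemma~\ref{lem:fibres}(1) to $f^n(p_\ast)\in f^n(K)$ with $K=\bigcap_n\overline{V_{s_n}}=\{p_\ast\}$; noting that the $(k+1)$-iterability of $\E_{(k)}$ together with the no-cycle condition makes the family $\{f^i(\overline{V_{s_k}}):|i|\leq|n|\}$ pairwise disjoint for every $k\geq|n|$, so that $f^n(\overline{V_{s_k}})$ is the unique element of $\E_{(k)}^{|n|}$ containing $f^n(p_\ast)$, I would obtain
\[
g^n(A)=\bigcap_{k\geq|n|}\psi_k^{-1}\bigl(f^n(\overline{V_{s_k}})\bigr)\subset\psi_{|n|}^{-1}\bigl(f^n(\overline{V_{s_{|n|}}})\bigr).
\]

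The hard step is to replace the outer $\psi_{|n|}^{-1}$ by $\psi_{|n|-1}^{-1}$, because $\mathbf{B_3}$ is phrased one level earlier. For this I would argue that $h_{|n|}$ acts as the identity on $f^n(\overline{V_{s_{|n|}}})$: by the construction of Lemma~\ref{semiconjugacy} the support of $h_{|n|}$ lies inside $\bigcup_{|i|\leq|n|-1}f^i(V_{s_{|n|-1}})$, while the $|n|$-iterability of $\E_{(|n|-1)}$ combined with the no-cycle condition makes $\{f^i(V_{s_{|n|-1}}):|i|\leq|n|\}$ pairwise disjoint, so $f^n(V_{s_{|n|-1}})$ misses that support; since $V_{s_{|n|}}\subset V_{s_{|n|-1}}$, the same holds for $f^n(\overline{V_{s_{|n|}}})$. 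Consequently $g^n(A)\subset\psi_{|n|-1}^{-1}\bigl(f^n(\overline{V_{s_{|n|}}})\bigr)$, and since $f^n(\overline{V_{s_{|n|}}})=f^{\pm|n|}(\overline{V_{s_{|n|}}})$ belongs to $\E_{(|n|)}^{|n|}\setminus\E_{(|n|)}^{|n|-1}\subset\E_{(|n|)}^{|n|+1}\setminus\E_{(|n|)}^{|n|-1}$, hypothesis $\mathbf{B_3}$ delivers $\diam\bigl(\psi_{|n|-1}^{-1}(f^n(\overline{V_{s_{|n|}}}))\bigr)\to 0$ as $|n|\to\infty$, from which the conclusion follows. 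The main subtlety is exactly this support argument: without the matching of the iterability range of $\E_{(|n|-1)}$ to the support of $h_{|n|}$, the natural bound would involve $\psi_{|n|}^{-1}$ of an element still in $\E_{(|n|)}^{|n|-1}$, whose diameter $\mathbf{B_3}$ does not control.
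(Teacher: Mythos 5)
Your argument is correct, but it follows a genuinely different route from the paper's. The paper proves the corollary by contradiction with a soft compactness argument: assuming $\diam g^{n_k}(A)\geq\delta$, it fixes a point $x\in Q$ whose $\psi$-fiber is a singleton, uses minimality of $(P,f)$ to steer $f^{n_k}(p_\ast)$ toward $\psi(x)$, extracts a Hausdorff limit $K$ of the fibers $g^{n_k}(A)=\psi^{-1}(f^{n_k}(p_\ast))$, and concludes from upper semi-continuity (Lemma~\ref{usc}) that $K\subset\psi^{-1}(\psi(x))=\{x\}$, contradicting $\diam K\geq\delta$. You instead push the estimate directly through the Denjoy--Rees bookkeeping: the identification $g^n(A)=\psi^{-1}(f^n(p_\ast))$ and Lemma~\ref{lem:fibres} give $g^n(A)\subset\psi_{|n|}^{-1}(f^n(\overline{V_{s_{|n|}}}))$; the support of $h_{|n|}$ lies in $\bigcup_{|i|\leq|n|-1}f^i(V_{s_{|n|-1}})$, which misses $f^n(\overline{V_{s_{|n|}}})$ because the construction explicitly arranges $\{f^i(\overline{V_m})\}_{|i|\leq m+2}$ to be pairwise disjoint and $s_{|n|-1}\geq|n|$ (this disjointness is what you need, rather than iterability plus the no-cycle condition per se); and $f^{\pm|n|}(\overline{V_{s_{|n|}}})$ indeed lies in $\E_{(|n|)}^{|n|+1}\setminus\E_{(|n|)}^{|n|-1}$, so $\mathbf{B_3}$ applies. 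Each approach buys something: yours is quantitative (it bounds $\diam g^n(A)$ by the mesh in $\mathbf{B_3}$ at stage $|n|$), uses neither minimality of $(P,f)$ nor the almost one-to-one property, and makes explicit why hypothesis $\mathbf{B_3}$ is the source of the asymptotic pair --- essentially the mechanism of \cite{Cro}; the paper's version is shorter, purely qualitative, and would work for any monotone semi-conjugacy onto a minimal system that is injective off a single orbit of nowhere dense fibers, without reopening the inductive construction.
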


\begin{proof}
Note that the semi-conjugacy $\psi$ on $\Bbb A$ can be regarded as 
a continuous map from $Q$ to $P$. 
Suppose to the contrary
that for some integer subsequence $\{n_k\}_{k\geq 0}$ and some $\delta>0$, 
every pseudo-arc 
$f^{n_k}(A)$ has diameter at least $\delta$.
Let us fix 
some point $x\in Q$ such that 
\begin{equation}\label{x_isinjective}
\psi^{-1} \circ \psi(x)=\{x\}.
\end{equation}
By minimality of $f$ on $P$, 
we can extract a further 
subsequence if necessary, still called $\{n_k\}_{k\geq 0}$, 
such that $g^{n_k}(A)$ converges to some set $K$ containing $x$.
By definition $g^n(A)=\psi^{-1}(f^n(p_*))$ and so
by Lemma~\ref{usc}, $\psi^{-1}\circ \psi(x)\supset K$. However, $K$ 
has diameter at least $\delta$, which 
is a contradiction with (\ref{x_isinjective}).
\end{proof}

\begin{proof}[Proof of Theorem~\ref{main_section}]
We have already proved in Theorem~\ref{Qis_pseudocircle}
that $Q$ is a pseudo-circle. 
By Lemma~\ref{nowheredense}, 
any iterate $g^n(A)$ 
has empty interior in $Q$ for any $n\in \Bbb Z$. 
Thus, the set 
$Q\backslash \bigcap_{n\in \Bbb Z}g^n(A)$
is a dense $G_\delta$ subset of $Q$. This means that 
the set of points $y \in Q$ such that $\psi^{-1}\circ \psi(y)=\{y\}$
is a dense $G_\delta$-set. In particular, $\psi$
is an almost $1$-to-$1$ semi-conjugacy between 
$(Q,g)$ and $(P,f)$. Therefore, it follows immediately 
that $(Q,g)$ is minimal since $(P,f)$ is minimal. (see for example~\cite{Glasner}).
Equation (\ref{shrinking}) was established in Corollary~\ref{shrinking_diameter}. 
The proof is completed.
\end{proof}

\section{Proof of the Main Theorem}\label{final}
In this section we finish the proof of our main result. It seems that it is known that a monotone image of the pseudo-circle is homeomorphic to it (see e.g. p. 91, \cite{Lewis}), but for completness we provide a short justification below. 

\begin{prop}
	\label{pseudocircle_quotient}
	Suppose $\phi:  \Bbb A \to  \Bbb A$ 
	is a monotone continuous
	surjective map. 
	Assume there is a non-degenerate 
	continuum $Y\subset \Bbb A$, such that $\phi^{-1}(Y)=P$. Then $Y$ is a pseudo-circle.
\end{prop}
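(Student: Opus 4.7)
The strategy is to verify the three characterizing properties of the pseudo-circle from Remark~\ref{pseudo_circle_characterization}: that $Y$ is hereditarily indecomposable, that it separates $\Bbb A$ (hence the plane), and that it is circle-like. Embeddability in every two-dimensional manifold comes for free since $Y \subset \Bbb A$. The key preliminary observation is that $\phi|_P\colon P \to Y$ is a continuous monotone surjection: surjectivity is $\phi(P) = \phi(\phi^{-1}(Y)) = Y$, and monotonicity holds because every fiber $\phi^{-1}(y)$ with $y \in Y$ lies in $\phi^{-1}(Y) = P$ and is connected by hypothesis. Since $\Bbb A$ is compact Hausdorff, $\phi$ is a closed map, and so preimages of subcontinua of $Y$ are subcontinua of $P$.

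With this setup, hereditary indecomposability of $Y$ is a short argument: if $M \subset Y$ is a subcontinuum and $M = A \cup B$ for subcontinua $A, B$, then $\phi^{-1}(M) = \phi^{-1}(A) \cup \phi^{-1}(B)$ is a decomposition of the subcontinuum $\phi^{-1}(M) \subset P$; hereditary indecomposability of $P$ forces (say) $\phi^{-1}(A) = \phi^{-1}(M)$, and applying $\phi$ gives $A = M$. For plane-separation, let $U, V$ be the two components of $\Bbb A \setminus P$. Both $\phi(U)$ and $\phi(V)$ are disjoint from $Y$ (any preimage of a $Y$-point lies in $P$) and from each other: a common value $\phi(u) = \phi(v)$ with $u \in U$, $v \in V$ would yield, by monotonicity, a connected fiber crossing $P$ and hence placing the common value in $Y$. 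Compactness gives $\phi(\overline U) \subset \phi(U) \cup Y$ closed, so $\phi(V) = \Bbb A \setminus \phi(\overline U)$ is open, and symmetrically for $\phi(U)$. Thus $\Bbb A \setminus Y = \phi(U) \sqcup \phi(V)$ is a nontrivial separation.

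The main obstacle is circle-likeness of $Y$. The plan is to transfer the defining circular chains of $P$ through $\phi$: given $\varepsilon > 0$, uniform continuity of $\phi$ on $\Bbb A$ yields $\delta > 0$ with $\diam(S) < \delta$ implying $\diam(\phi(S)) < \varepsilon/2$. Take a circular chain $\mathcal{C} = \{D_0,\ldots,D_{m-1}\}$ of small-diameter open connected sets covering $P$ with $\mesh(\mathcal{C}) < \delta$. The images $\phi(D_i)$ cover $Y$ with small-diameter sets and with consecutive intersections nonempty, since $\phi(D_i \cap D_{i+1}) \neq \emptyset$. The delicate point is that non-adjacent images $\phi(D_i) \cap \phi(D_j)$ may also be nonempty because $\phi$ is not injective, but such intersections are controlled by monotonicity: each fiber $\phi^{-1}(y)$ is connected, so the index set $\{i : y \in \phi(D_i)\}$ is a single arc in $\Bbb Z_m$, and merging adjacent arcs whose image-blocks overlap and thickening to open neighborhoods of $Y$ yields a genuine circular chain cover of $Y$ of mesh $< \varepsilon$. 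Equivalently, one appeals to the folklore principle that a monotone continuous image of a circle-like continuum is either chainable or circle-like; since $Y$ separates $\Bbb A$ and no planar chainable continuum separates the plane, $Y$ cannot be chainable and must be circle-like. Invoking Remark~\ref{pseudo_circle_characterization} then identifies $Y$ as the pseudo-circle.
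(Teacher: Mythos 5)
Your proposal is correct and takes essentially the same approach as the paper: verify the characterization of the pseudo-circle as a circle-like, plane-separating, hereditarily indecomposable continuum by pulling subcontinua back through the monotone map $\phi$. The paper merely compresses the steps you elaborate---it cites Krupski's lemma for circle-likeness of monotone images, declares plane separation immediate from continuity, and gives the same one-line preimage argument for hereditary indecomposability.
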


\begin{proof} 
	We need to show that $Y$ is circle-like, plane separating and hereditarily indecomposable. 
	
	It is well known that a monotone image of a circle-like continuum is circle-like (see e.g. Lemma 9, \cite{Krupski})
	Moreover $Y$ is non-degenerate and $\phi^{-1}(Y)= P$ is plane separating. 
	By the continuity of $\phi$, 
	it is immediate that $Y$ is also plane separating. It remains to show that $Y$ is hereditarily indecomposable. But this is also immediate, since if a subcontinuum $K$ of $Y$ were decomposable, then $\phi^{-1}(K)$ would also be a decomposable continuum, resulting in a contradiction. This shows that $Y$ is a pseudo-circle. 
\end{proof}

Theorem~\ref{main} 
follows directly 
from the following statement. 
\begin{theorem}
There exists a continuous surjection 
$g'\colon \Ann\to \Ann$ with an invariant pseudo-circle 
$P\subset \Ann$ such that $(g',P)$ is minimal but is not one-to-one.
\end{theorem}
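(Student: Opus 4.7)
The plan is to build $g'$ as a monotone quotient of the minimal homeomorphism of Theorem~\ref{main_section}. Let $f\colon\Ann\to\Ann$ be that homeomorphism, $Q\subset\Ann$ its invariant pseudo-circle, and $A\subset Q$ the pseudo-arc with $\diam f^n(A)\to 0$ as $|n|\to\infty$. The idea is to collapse the \emph{forward} orbit $\{f^n(A)\}_{n\geq 0}$ to a sequence of points, leaving all other fibers as singletons; the asymmetric (one-sided) identification is what will destroy invertibility, while the two-sided shrinking of $\diam f^n(A)$ ensures the decomposition is well-behaved.

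Concretely, I would define the decomposition
\[
\mathcal D \;=\; \{f^n(A) : n\geq 0\}\cup\{\{x\}: x\notin \textstyle\bigcup_{n\geq 0}f^n(A)\}
\]
of $\Ann$, and first verify that $\mathcal D$ is an upper semi-continuous Moore decomposition. Upper semi-continuity follows from three facts: the pseudo-arcs $\{f^n(A)\}_{n\in\Z}$ are pairwise disjoint (being distinct fibers of the semi-conjugacy $\psi$ of Lemma~\ref{semiconjugacy} over the aperiodic orbit of $p_\ast$ under Handel's map from Lemma~\ref{Handel}); the diameters tend to $0$ as $n\to+\infty$; and each class is compact. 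Each class is contained in a topological disk because pseudo-arcs are non-separating planar continua, and the small-diameter iterates trivially sit in disks. Moore's theorem (Lemma~\ref{Moore_theorem}) then produces a monotone continuous surjection $\eta\colon\Ann\to\Ann$ whose point preimages are exactly the elements of $\mathcal D$.

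Since $f(f^n(A))=f^{n+1}(A)$ remains in $\mathcal D$ for all $n\geq 0$, the decomposition is forward $f$-invariant, so $f$ descends to a continuous surjection $g'\colon\Ann\to\Ann$ satisfying $g'\circ\eta=\eta\circ f$. Setting $P:=\eta(Q)$, all collapsed sets lie in $Q$, hence $\eta^{-1}(P)=Q$, and Proposition~\ref{pseudocircle_quotient} gives that $P$ is a pseudo-circle. Minimality of $(P,g')$ is immediate from minimality of $(Q,f)$ via the factor map $\eta|_Q$. For non-invertibility: for every $x\in f^{-1}(A)$, the class $\{x\}$ is a singleton (since $f^{-1}(A)$ is disjoint from $\bigcup_{n\geq 0}f^n(A)$ by pairwise disjointness), and $g'(\eta(x))=\eta(f(x))=\eta(A)$; as $f^{-1}(A)$ is an uncountable pseudo-arc, the single point $\eta(A)\in P$ has uncountably many $g'$-preimages in $P$.

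The main obstacle I anticipate is verifying that $\mathcal D$ is genuinely a Moore decomposition on all of $\Ann$: pairwise disjointness of the iterates $\{f^n(A)\}_{n\in\Z}$, which propagates from aperiodicity of $p_\ast$ through the semi-conjugacy structure of Lemma~\ref{semiconjugacy}, and the disk-embeddability of the pseudo-arcs $f^n(A)$ for the finitely many small indices $n$ where the diameter has not yet shrunk. Both should be routine given the machinery already built, but they are the real technical content of the argument; once in hand, the rest is formal bookkeeping and direct appeals to Proposition~\ref{pseudocircle_quotient} and standard factor-map minimality.
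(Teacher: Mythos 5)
Your proposal is correct and follows essentially the same route as the paper: collapse the forward orbit $\{g^n(A)\}_{n\ge 0}$ via an upper semi-continuous Moore decomposition, let the minimal homeomorphism of Theorem~\ref{main_section} descend to a continuous surjection, and invoke Proposition~\ref{pseudocircle_quotient} together with standard factor-map minimality. Your write-up is in fact slightly more detailed than the paper's on the two points it leaves implicit (why the decomposition is upper semi-continuous and why the quotient map fails to be injective, via the uncountably many singleton classes in $g^{-1}(A)$ all mapping to the collapsed point).
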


\begin{proof}
Let
$g\colon \Ann\to \Ann$ denote 
the homeomorphism provided by 
Proposition~\ref{main_section} minimal on pseudo-circle $Q$ and let $A\subset Q$ be the associated pseudo-arc. 
Define the equivalence relation 
$\sim$ in $\Ann$, such that, 
$x\sim y$ if and only if either 
$x=y$, or $x,y\in g^n(A)$ for some $n\geq 0$. 
Since by Proposition~\ref{main_section}, 
\begin{equation}
\lim_{|n|\to \infty} \text{diam} \big( g^n(A)\big) =0,
\end{equation}
it follows that 
$\sim$ is a closed equivalence relation with 
connected equivalence classes. Moreover, 
it is not hard to see the family of equivalence classes form a 
upper semi-continuous family. 
Thus, by Moore's theorem (see Lemma~\ref{Moore_theorem}), 
there exists a semi-conjugacy
$\pi\colon \Bbb A\to \Bbb A$, which collapses each equivalence 
class into one point in $\Bbb A$. 
Then it induces a map $\hat{g}\colon \Bbb A\to \Bbb A$, such that 
$\pi\circ g= \hat{g} \circ \pi$. 
Note that $\hat{g}$ 
is continuous non-invertible surjection
and $\pi \colon (\Ann,g)\to (\Ann,\hat{g})$ is a monotone surjective continuous 
map (a factor map). Clearly $\hat{g}$ must be minimal.
Then, by Proposition~\ref{pseudocircle_quotient}, the proof is completed. 
\end{proof} 

\bibliographystyle{plain}

\end{document}